\DeclareSymbolFont{SY}{U}{psy}{m}{n}
\DeclareMathSymbol{\emptyset}{\mathord}{SY}{'306}
\theoremstyle{plain}
\newtheorem{thm}{Theorem}
\newtheorem{lem}[thm]{Lemma}
\newtheorem{pro}[thm]{Proposition}
\newtheorem{cor}[thm]{Corollary}
\theoremstyle{definition}
\newtheorem{defn}[thm]{Definition}
\def\beq{\begin{eqnarray}}
\def\eeq{\end{eqnarray}}
\def\beqa{\begin{eqnarray*}}
\def\eeqa{\end{eqnarray*}}
\begin{document}
\title[Generalized Bundle Shift]{ Generalized Bundle Shift with Application to Multiplication operator on the Bergman space}

\author[Douglas]{Ronald G. Douglas}
\address[Douglas]{Department of Mathematics, Texas A\&M University,
College Station, Texas 77843, USA} \email{rgd@math.tamu.edu}
\author[Keshari]{Dinesh Kumar Keshari}
\address[Keshari]{Department of Mathematics, Texas A\&M University,
College Station, TX 77843, USA} \email{kesharideepak@gmail.com}
\author[Xu]{Anjian Xu}
\address[Xu]{Department of Mathematics, Chongqing University of Technology and \\
Department of Mathematics, Texas A\&M University, College Station, TX 77843, USA} \email{xuaj@cqut.edu.cn}

\begin{abstract}
Following upon results of Putinar, Sun, Wang, Zheng and the first author, we provide models for the restrictions of the multiplication by a finite Balschke product on the Bergman space in the unit disc to its reducing subspaces. The models involve a generalization of the notion of bundle shift on the Hardy space introduced by Abrahamse and the first author to the Bergman space. We develop generalized bundle shifts on more general domains. While the characterization of the bundle shift is rather explicit, we have not been able to obtain all the earlier results appeared, in particular, the facts that the number of the minimal reducing subspaces equals the number of connected components of the Riemann surface $B(z)=B(w)$ and the algebra of commutant of $T_{B}$ is commutative, are not proved. Moreover, the role of the Riemann surface is not made clear also.
\end{abstract}
\begin{subjclass}
\subjclass[2010]{47B32, 47B35}
\end{subjclass}
\maketitle
\keywords{Bergman Bundle Shift, Bergman space, Finite Blaschke product, Reducing subspace}
\section{Introduction}
In the study of bounded linear operators, we can ask about the
reducing subspaces of a general bounded operator defined on a
separable Hilbert space. But, in general, we can not say much about
reducing subspace of arbitrary bounded linear operators. If we
restrict attention to some special class of operators, then we can
get more information about their reducing subspaces. One set of examples consist of multiplication operators by
finite Blachke products on the Bergman space on the unit disc $\mathbb{D}$.


Let $\mathcal{O}(\mathbb{D})$ be the set of holomorphic functions
on $\mathbb{D}$ and $L_a^2(\mathbb{D})$ be the
Bergman space of functions in $\mathcal{O}(\mathbb{D})$ satisfying
\[\|f\|^{2}=-\frac{1}{2\pi i}\int_{\mathbb{D}}|f(z)|^2\\dz\wedge d\bar{z}<\infty\]

Let $T$ be a bounded linear operator on
$L_a^2(\mathbb{D})$, a subspace $\mathfrak{M}$ of
$L_a^2(\mathbb{D})$ is called a \emph{reducing subspace} of $T$ if
$T(\mathfrak{M})\subseteq \mathfrak{M}$ and
$T^*(\mathfrak{M})\subseteq \mathfrak{M}$. A reducing subspace
$\mathfrak{M}$ of $T$ is called \emph{minimal} if for every reducing
subspace $\tilde{\mathfrak{M}}$  of $T$ such that
$\tilde{\mathfrak{M}}\subseteq \mathfrak{M}$ then either
$\tilde{\mathfrak{M}}= \mathfrak{M}$ or $\tilde{\mathfrak{M}}=0$.

An $n$-th order Blaschke product $B$ is the analytic function on
$\mathbb{D}$ given by
$$B(z)=e^{i\theta}\prod_{i=1}^{n}\frac{z-a_i}{1-\bar{a}_iz},$$
where $\theta$ is a real number and $a_i\in\mathbb{D}$ for $1\leq
i\leq n$.

Let $T_B$ be the multiplication operator on $L_a^2(\mathbb{D})$
by $B$. Zhu first studied the reducing subspaces of $T_B$, and showed that $T_B$ has exactly two distinct minimal reducing subspaces when the Blachke product is of order $2$
(cf. \cite{zhu}). Motivated by this fact, Zhu conjectured that the number of minimal reducing subspaces of
$T_B$ equals to the order of $B$(cf. \cite{zhu}). Guo, Sun, Zheng and Zhong showed that in general this is
not true  (cf. \cite{gszz}), and they found that the number of minimal reducing subspaces of
$T_B$ equals to the number of connected components of the Riemann surface of $B(z)=B(w)$ when the order of $B$ is 3,4,6. Then they conjectured that the number of minimal reducing subspaces of $T_B$ equals to the number of connected components of the Riemann surface of $B(z)=B(w)$ for any finite Blaschke product (the refined Zhu's conjecture, cf. \cite{gszz}).

Set $B(z):=e
^{i\theta}\prod\limits_{i=1}^{n}\frac{z-a_i}{1-\bar{a}_i z}=\tfrac{P(z)}{Q(z)}$ where $P(z)=e
^{i\theta}\prod\limits_{i=1}^{n}(z-a_i)$ and
$Q(z)=\prod\limits_{i=1}^{n}(1-\bar{a}_i z)$, and let $(\mathcal{W}^*(T_B))^{\prime}$ denote the \emph{double commutant} of $T_B$, i.e., all $T$ in $\mathcal{L}(L_a^2(\mathbb{D}))$ such that $TT_B=T_BT$ and $TT_B^*=T_B^*T$. It is easy to see $(\mathcal{W}^*(T_B))^{\prime}$ is a Von Neumann algebra.
The first author, Sun and Zheng proved following theorem in \cite{dsz}.

\begin{thm}
If $B$ is a finite Blaschke product, then $(\mathcal{W}^*(T_B))^{\prime}$ has dimension $q$, where $q$ is the number of connected components of the Riemann surface $S_f$, where $f(z,w)=P(w)Q(z)-P(z)Q(w)$.
\end{thm}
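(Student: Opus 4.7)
The plan is to realize $T_B$ as a generalized (Bergman) bundle shift and compute its commutant via holomorphic Hermitian bundle endomorphisms. Since $B\colon\mathbb{D}\to\mathbb{D}$ is a proper holomorphic map of degree $n$, it is an $n$-sheeted branched cover; outside the finite critical set $C\subset\mathbb{D}$, it is a genuine covering. Pulling back $L_a^2(\mathbb{D})$ along $B$ identifies the Bergman space with the square-integrable holomorphic sections of a rank-$n$ holomorphic Hermitian vector bundle $E_B\to\mathbb{D}\setminus B(C)$, suitably extended across $B(C)$; under this identification $T_B$ becomes multiplication by the base coordinate $\lambda$ on sections.

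First I would make this bundle-shift picture rigorous and show that $(\mathcal{W}^*(T_B))'$ consists precisely of those bounded operators which, off $B(C)$, act fibrewise by a holomorphic section of $\mathrm{End}(E_B)$ compatible with the Hermitian structure. Commuting with $T_B$ alone yields arbitrary holomorphic endomorphism-valued multipliers; the further requirement of commuting with $T_B^*$ then forces the multiplier to have \emph{constant} fibrewise value, i.e.\ to come from a single endomorphism of the typical fibre $\mathbb{C}^n$ that is equivariant under the monodromy action of $\pi_1(\mathbb{D}\setminus B(C))$ on the $n$ sheets.

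With that reduction in hand, the dimension count becomes combinatorial. Let $G\subset S_n$ denote the image of the monodromy representation on the sheets. The algebra we want then has dimension $\dim M_n(\mathbb{C})^G$, and Burnside's orbit-counting lemma applied to the diagonal $G$-action on $\{1,\ldots,n\}^2$ equates this with the number of $G$-orbits on ordered pairs of sheets. On the other hand, two such pairs $(z_i,z_j)$ and $(z_i',z_j')$ of preimages of a generic $\lambda\in\mathbb{D}$ lie in the same $G$-orbit if and only if the corresponding points of $S_f=\{f(z,w)=0\}$ can be joined inside the smooth locus of $S_f$ by a path that covers a loop in $\mathbb{D}\setminus B(C)$; this is precisely the criterion for their lying in the same connected component of $S_f$. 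Hence the dimension equals $q$.

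The main obstacle is the first step. The Bergman metric endows $E_B$ with nontrivial curvature, so identifying $(\mathcal{W}^*(T_B))'$ with endomorphism-valued multipliers requires a careful Hermitian-geometric argument that has no direct counterpart in the Hardy-space theory of Abrahamse and Douglas; moreover one must verify that the branch points $B(C)$ create no additional commutant elements, i.e., that every such multiplier extends across $B(C)$ to a bounded operator on $L_a^2(\mathbb{D})$ and, conversely, that every element of $(\mathcal{W}^*(T_B))'$ arises in this way. This delicate Hermitian-bundle analysis is exactly what motivates the paper's development of generalized bundle shifts in the Bergman setting.
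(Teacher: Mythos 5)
First, a point of orientation: this paper does not prove the statement at all --- it is quoted from \cite{dsz}, where the argument proceeds by analytic continuation of local inverses and a direct analysis of the unitaries in the commutant, not by the bundle-shift route. More to the point, the strategy you propose is precisely the program the present paper carries out, and the authors say explicitly (in the abstract and in the closing paragraph of Section 3) that it does not yet yield the dimension formula: representing $T_B$ as a Bergman bundle shift gives the $*$-isomorphism $(\mathcal{W}^*(T_B))'\cong(\mathcal{W}^*(\alpha))'$ (the main theorem of Section 3 combined with Theorem~\ref{thmb} and its corollary), but the linear dimension is one of the two results they state they could not recover, the missing ingredient being ``a more careful analysis of the covering group associated to the Riemann surface.'' So the first half of your argument is sound and coincides with the paper; the second half sits exactly on the acknowledged gap.

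Concretely, the gap is the sentence asserting that two ordered pairs of sheets lie in the same $G$-orbit if and only if the corresponding points lie in the same connected component of $S_f$. Two things must actually be proved there. (i) That the permutation representation $\alpha$ used to build $E_B$ (obtained by continuing the inverses $\sigma_i$ across the cuts $l_i$) is the monodromy of the covering $B:\mathbb{D}\setminus B^{-1}(\mathcal{S})\to\mathbb{D}\setminus\mathcal{S}$; the paper notes that $\alpha$ depends on the choice of cuts and is determined only up to conjugacy, which is enough for the dimension but still has to be matched against the surface $S_f$ as it is defined in \cite{dsz}. (ii) That the connected components of $S_f$ really are the $G$-orbits on the $n^2$-point fiber. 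This holds for the restriction of $S_f$ over $\mathbb{D}\setminus\mathcal{S}$, but the variety $\{f=0\}$ is singular over $\mathcal{S}$ and distinct sheets can meet there: for $B(z)=z^2$ one has $f=(z-w)(z+w)$, whose zero set is connected through the origin even though $q=2$. So one must take $S_f$ to mean the unbranched part (or its normalization) and verify that this is the object counted in \cite{dsz}. Your algebraic step is fine: $\dim M_n(\mathbb{C})^G$ equals the number of $G$-orbits on ordered pairs (this follows directly from the orbital basis, with no need for Burnside). One smaller correction: $E_B$ is a \emph{flat} unitary bundle, so the ``nontrivial curvature'' you worry about is not the obstacle; the technical input for showing that elements of the double commutant are constant equivariant endomorphisms is the Cowen--Douglas multiplier theorem together with the observation that a holomorphic multiplier whose adjoint is also holomorphic must be constant, which is exactly how the paper argues in Lemma~\ref{ct} and Theorem~\ref{thmb}.
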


Zhu's refined conjecture is then proved once one can prove $(\mathcal{W}^*(T_B))^{\prime}$ is commutative, in the same paper, they proved affirmatively for the cases $n\leq8$. Putinar, Wang and the first author \cite{dpw} proved the following theorem for the general cases by using the notion of local inverses of a finite Balschke product which is introduced by Thompson \cite{t}.

\begin{thm}
$(\mathcal{W}^*(T_B))^{\prime}$ is commutative for any finite Blaschke product.
\end{thm}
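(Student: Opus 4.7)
The plan is to follow the approach of Douglas, Putinar and Wang by producing explicit generators of $(\mathcal{W}^*(T_B))^{\prime}$ from Thompson's local inverses of $B$ and then showing directly that these generators commute. At any regular point $z_0\in\mathbb{D}$ (where $B'(z_0)\neq 0$), the equation $B(w)=B(z_0)$ has $n$ simple solutions $z_0=w_1,w_2,\dots,w_n$, and the implicit function theorem applied to $B(w)=B(z)$ yields $n$ holomorphic branches $\rho_1=\mathrm{id},\rho_2,\dots,\rho_n$ defined near $z_0$. These branches analytically continue along every path in $\mathbb{D}$ avoiding the critical set $E=\{B'=0\}$, and each graph $\{(z,\rho_i(z))\}$ lies on a single connected component of the Riemann surface $S_f$.

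To each local inverse $\rho$ associate the weighted composition operator $(V_\rho f)(z):=\rho'(z)f(\rho(z))$. A direct computation using $B\circ\rho=B$ gives $V_\rho T_B = T_B V_\rho$ on the domain of $\rho$; the change-of-variables identity $dA(\rho(z))=|\rho'(z)|^2\,dA(z)$ together with the holomorphy of $\rho$ supplies the Bergman-space partial-isometry property needed to secure the companion relation with $T_B^*$ after summing over an orbit. Group the branches by the connected components $C_1,\dots,C_q$ of $S_f$ and define
\[
U_k\,f\;:=\sum_{\rho:\,\mathrm{graph}(\rho)\subset C_k}V_\rho f,\qquad k=1,\dots,q.
\]
Monodromy permutes the branches within each $C_k$, so each sum is single-valued on $\mathbb{D}\setminus E$; routine area-integral estimates extend $U_k$ to a bounded operator on $L_a^2(\mathbb{D})$ lying in $(\mathcal{W}^*(T_B))^{\prime}$.

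The operators $U_1,\dots,U_q$ are linearly independent, and Theorem~1 gives $\dim(\mathcal{W}^*(T_B))^{\prime}=q$; consequently $\{U_k\}_{k=1}^{q}$ is a linear basis of $(\mathcal{W}^*(T_B))^{\prime}$, and the theorem reduces to verifying $U_j U_k = U_k U_j$ for all $j,k\in\{1,\dots,q\}$.

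This final identity is the principal obstacle. Expanding gives $U_j U_k=\sum_{\rho\in C_j,\,\sigma\in C_k}V_{\rho\circ\sigma}$ and similarly for $U_k U_j$, so the question becomes whether the multisets of composed branches $\{\rho\circ\sigma:\rho\in C_j,\sigma\in C_k\}$ and $\{\sigma\circ\rho:\sigma\in C_k,\rho\in C_j\}$ coincide as collections of local inverses of $B$. The plan is to track these compositions through analytic continuation on $S_f$: $\rho\circ\sigma$ corresponds to concatenating a lift of $\sigma$ with a lift of $\rho$, and the goal is to show that the connected component of the resulting branch depends only on the unordered pair $\{C_j,C_k\}$. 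Establishing this bijection between $C_j\cdot C_k$ and $C_k\cdot C_j$ is the heart of the Douglas--Putinar--Wang argument and hinges on Thompson's canonical parametrisation of local inverses; it requires a delicate combinatorial and topological analysis of how the monodromy of $B$ permutes ordered pairs of branches, and is the main technical step of the proof.
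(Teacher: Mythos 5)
There is a genuine gap, and in fact two things need to be said. First, the paper does not prove this theorem at all: it is quoted from Douglas--Putinar--Wang \cite{dpw}, and both the abstract and the closing paragraph of the paper state explicitly that the commutativity of $(\mathcal{W}^*(T_B))^{\prime}$ is one of the facts the present paper does \emph{not} recover from the bundle-shift picture. So there is no in-paper proof to compare against; your attempt has to stand on its own.

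On its own terms, your proposal is a plan rather than a proof. The framework you set up (local inverses $\rho_i$, weighted composition operators $V_\rho f=\rho'\cdot(f\circ\rho)$, the sums $U_k$ over the connected components of $S_f$, and the dimension count $\dim(\mathcal{W}^*(T_B))'=q$ from Theorem~1 forcing $\{U_k\}$ to be a basis) is the correct Douglas--Sun--Zheng/Douglas--Putinar--Wang machinery. But the statement to be proved is precisely $U_jU_k=U_kU_j$, and your write-up stops exactly there, labelling it ``the principal obstacle'' and ``the main technical step of the proof.'' Reducing commutativity of the algebra to commutativity of the basis elements is the easy reduction; the content of the theorem is the combinatorial/monodromy argument showing that the multiset of composed branches $\{\rho\circ\sigma\}$ is component-wise symmetric in $j$ and $k$, and that argument is entirely absent. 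Two secondary points are also glossed over: (i) membership of $U_k$ in the \emph{double} commutant requires showing $U_kT_B^*=T_B^*U_k$, which does not follow from a change of variables alone and is a genuine analytic step in \cite{dsz}; and (ii) with your convention one has $V_\rho V_\sigma=V_{\sigma\circ\rho}$, not $V_{\rho\circ\sigma}$, so the bookkeeping in the final expansion needs to be redone. As it stands, the proposal assumes the theorem at the step where it must be proved.
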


One can show that the restrictions of $T_{B}$ onto its minimal reducing subspaces are not unitary equivalent to each other. Then the following question comes naturally: what are these operators obtained by restricting $T_{B}$ onto its minimal reducing subspaces?

Firstly let us consider the special case when $B(z)=z^n$. For $0\leq i < n$, set
$$\mathcal{L}_{n,i}= \vee \{z^l: l=i\; \mbox{mod}\;n\}.$$
$\{\mathcal{L}_{n,i}\}_{i=0}^{n-1}$ are minimal
reducing subspaces of $T_{z^n}$ and they are pairwise orthogonal.
Set $L_{a}^{2,i}(\mathbb{D}\setminus\{0\})=
L_{a}^2\big(\mathbb{D}\setminus\{0\},|\frac{1}{z^{\frac{n-i-1}{n}}}|^2\,dz\wedge d\bar
z\big)$, $0\leq i<n$. Define the map
$U_i:L_{a}^{2,i}(\mathbb{D}\setminus\{0\})\to \mathcal{L}_{n,i}$ as follows;
\[U_i(f)(z)=\sqrt{n}z^if(z^n), \;\;\mbox{for} \;\;z \in\mathbb{D}.\] Then $U_i$ is a unitary map such that $U_i\circ T_z =
T_{z^n}|_{\mathcal{L}_{n,i}}\circ U_i$. Thus restrictions of
$T_{z^n}$ to its minimal reducing subspaces are unitarily
equivalent to the multiplication operators $T_z$ on weighted
Bergman spaces, and there exists a distinguished minimal reducing subspace such that the restriction operator is unitarily equivalent to the Bergman shift. Further, one can calculate the curvatures of these restrictions, and prove that they are not unitarily equivalent, (c.f.\cite{ron})

 We are interested in describing model spaces for the minimal reducing subspaces
of Blashke product $B$ of order $n$. We show they are
weighted Bergman spaces attained from the analogue of bundle shifts building by using the Bergman space, and such that the multiplication operators on
them are unitarily equivalent to restrictions of $T_B$ on its minimal
reducing subspaces. The goal of this paper is trying to provide generalized Bergman bundle shifts as the model spaces viz, complex geometry and Riemann surface, etc.

To describe the progress in this direction, we need to introduce
some notations. For the \emph{flat unitary vector bundle} $E$ we
denote the multiplication operator on the space of holomorphic
$L^2$ sections of $E$, $L_a^2(E)$, by $T_E$. Set, $\mathcal{S}= B(\{\beta\in
\mathbb{D} :B^{\prime}(\beta)=0\}).$ One of our main result is the
following.

\begin{thm}
Let $B$ be a finite Blaschke product of order $n$. Let $T_B$ be
the multiplication operator on $L_a^2(\mathbb{D})$ by $B$. Let
$E_{B}$ be the flat unitary vector bundle over
$\mathbb{D}\setminus\mathcal{S}$ determined by $B$. Let $T_{E_{B}}$ be the
multiplication operator on $L_a^2(E_{B})$ defined by $z$. Then the
operator $T_B$ is unitarily equivalent to the operator $T_{E_{B}}$.
\end{thm}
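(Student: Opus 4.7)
The plan is to write down an explicit unitary $U : L_a^2(\mathbb{D}) \to L_a^2(E_{B})$ that intertwines $T_B$ with $T_{E_{B}}$. The geometric input is that $B : \mathbb{D} \setminus B^{-1}(\mathcal{S}) \to \mathbb{D} \setminus \mathcal{S}$ is an $n$-sheeted holomorphic covering, so on any simply connected open set $V \subset \mathbb{D} \setminus \mathcal{S}$ there are exactly $n$ holomorphic inverse branches $\rho_1, \ldots, \rho_n$ of $B$. The monodromy obtained by analytically continuing these branches around loops in $\mathbb{D} \setminus \mathcal{S}$ is, by construction, the permutation representation $\pi_1(\mathbb{D} \setminus \mathcal{S}) \to U(n)$ whose associated flat unitary vector bundle is $E_{B}$.

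First I would define $U$ locally by
\[
(Uf)(w) \;=\; \bigl( f(\rho_1(w))\,\rho_1'(w), \ldots, f(\rho_n(w))\,\rho_n'(w) \bigr),
\]
viewed as a section of $E_{B}|_V$. The Jacobian factor $\rho_k'$ is precisely what makes the map isometric: the change of variables $z = \rho_k(w)$ gives $dA(z) = |\rho_k'(w)|^2\, dA(w)$, and since the sheets $\rho_k(V)$ cover $\mathbb{D} \setminus B^{-1}(\mathcal{S})$ essentially disjointly and the branch set has measure zero, summing over $k$ yields $\|Uf\|_{L_a^2(E_{B})} = \|f\|_{L_a^2(\mathbb{D})}$. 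The intertwining is then immediate from $B(\rho_k(w)) = w$, since
\[
(UT_B f)(w)_k \;=\; B(\rho_k(w))\, f(\rho_k(w))\, \rho_k'(w) \;=\; w\, (Uf)(w)_k.
\]
Globally, relabeling the branches by analytic continuation along a loop permutes the components of $Uf$ by exactly the transition function of $E_{B}$, so the local tuples glue into a well-defined holomorphic section.

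To prove surjectivity I would construct the inverse. Given $s \in L_a^2(E_{B})$ with local components $s_1, \ldots, s_n$ on $V$, define $f$ on $\rho_k(V) \subset \mathbb{D}$ by
\[
f(z) \;:=\; s_k(B(z))\, B'(z),
\]
so that $f(\rho_k(w))\rho_k'(w) = s_k(w)$. The same monodromy compatibility that made $U$ well-defined ensures that the $n$ local formulas for $f$ agree on overlaps, yielding a holomorphic function on $\mathbb{D} \setminus B^{-1}(\mathcal{S})$ with $\|f\|_{L_a^2}^2 = \|s\|_{L_a^2(E_{B})}^2$. Because $B^{-1}(\mathcal{S})$ is finite and $f$ is square-integrable, Riemann's removable singularity theorem extends $f$ across this set to an element of $L_a^2(\mathbb{D})$ satisfying $Uf = s$.

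The main obstacle is the monodromy compatibility in both gluings: verifying that the tuples $(f \circ \rho_k)\rho_k'$ really transform by the same $U(n)$-representation that defines $E_{B}$, and dually that $s_k(B(z))B'(z)$ patches to a single-valued function on the punctured disc. Both points reduce to identifying the flat structure on $E_{B}$ with the deck-transformation action on branches of $B^{-1}$, so once $E_{B}$ is set up carefully the argument becomes essentially bookkeeping. The extension across the finite critical set $B^{-1}(\mathcal{S})$ is then routine given the $L^2$ bound.
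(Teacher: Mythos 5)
Your construction is essentially identical to the paper's: the paper defines $\Gamma(f)=\tfrac{1}{\sqrt n}\big[\big((f\circ\sigma_1)\sigma_1',\dots,(f\circ\sigma_n)\sigma_n'\big)^{\rm tr}\big]$ using the $n$ branches $\sigma_i$ of $B^{-1}$ on the cut disc $\mathbb{D}_B$, proves isometry by the same change of variables over the disjoint sheets (Lemma \ref{lem1}), proves surjectivity with the same inverse formula (your factor $B'(z)$ equals the paper's $(\sigma_i'(B(z)))^{-1}$ by the chain rule), and obtains the intertwining from $B\circ\sigma_i=\mathrm{id}$. Your write-up is in fact slightly more careful on two points the paper glosses over --- the monodromy gluing across the cuts and the removable-singularity extension of the preimage function across the finite set $B^{-1}(\mathcal{S})$ --- and your normalization (omitting the $1/\sqrt n$) is the one that makes the map isometric for the standard inner product on sections.
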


The definition of $E_{B}$ will be explained in the next section.

\section{ Generalized Bundle Shifts}
For supplying the model spaces of $T_{B}$ on its reducing subspaces, the first thing of this section is to define a generalized bundle shift and then classify them under unitary equivalence. The investigation of this section extends the paper \cite{ad}.
\subsection{Generalized Bergman Bundle Shifts}
 Let $\Omega$ be an open subset of $\mathbb{C}^m$. Let $\mathfrak{K}$ be a Hilbert space.
\begin{defn} A \emph{continuous vector bundle} , which is a family of Hilbert spaces over $\Omega$, is a
topological  space $E$ together with:
\begin{enumerate}
\item A continuous map $q: E\to \Omega$,
\item A Hilbert space
structure on each fibre $E_z=q^{-1}(z)$, $z\in \Omega$, such that
the Hilbert space topology on $E_z$ is the same as the topology
inherited from $E$,
\item For each $z\in\Omega$, there exists a
neighborhood $U$ of $z$ in $\Omega$ and a homeomorphism $\phi_U: q^{-1}(U)\to
U\times \mathfrak{K}$ such that:
\begin{enumerate}
\item For each $(w,k)\in U\times \mathfrak{K}$, the point
$\phi_U(w,k)\in E_w$, \item For each $w\in U$, the map $\phi_U^w:
\mathfrak{K}\to E_w$ defined by $\phi_U^w(k)=\phi_U(w,k)$ is a
continuous linear isomorphism.
\end{enumerate}
\end{enumerate}
\end{defn}

\begin{defn}
\begin{enumerate}
\item A continuous vector bundle $E$ is called a \emph{holomorphic}
vector bundle if for all open sets $U$ and $V$ such that $U\cap
V\neq\varnothing$ then
$${\phi_U\circ \phi_{V}^{-1}}|_{(U\cap
V)\times \mathfrak{K}}: (U\cap V)\times \mathfrak{K}\to (U\cap V)\times
\mathfrak{K}$$ is given by the following expression
$$\phi_U\circ \phi_{V}^{-1}(w,k)=(w,
\phi_{UV}(w)k),$$ where $\phi_{UV}: U\cap V \to
\mathcal{G}\mathcal{L}(\mathfrak{K})$ is holomorphic and
$\mathcal{G}\mathcal{L}(\mathfrak{K})$ is the set of invertible
bounded linear operator on the Hilbert space $\mathfrak{K}$.
\item A holomorphic vector bundle $E$ is called a \emph{Hermitian} holomorphic
vector bundle if an inner product is given on $E_z$ which varies
smoothly as a function of $z$, that is, for given any two smooth
local sections $s,t$ of $E$, the function $z\mapsto \langle
s(z),t(z)\rangle_z$ is a smooth function.
\end{enumerate}
\end{defn}

 For a given  Hermitian holomorphic vector bundle $E$, let
$\Gamma_a(\Omega)$ be the space of holomorphic sections of $E$
over $\Omega$. The Bergman space $L_a^2(E)$ is defined as
\[L_a^2(E)=\{f\in \Gamma_a(\Omega):
\|f\|_{L_a^2(E)}^2:=\int_{\Omega}\|f(z)\|^2_{E_z}\,dV(z)<\infty\}.\]
Usually, a reproducing kernel Hilbert space consists of vector-valued functions taking value in $\mathbb{C}^{k}$, but more general notion is possible.
A point evaluation $ev_z: L_a^2(E)\to E_z$ is a continuous
function for all $z\in \Omega$, so $L_a^2(E)$ is a reproducing kernel Hilbert space. As a fact, the function
$K_E:\Omega\times \Omega\to \mathcal{L}(E^*,E)$ defined by $K_E(z,w)=ev_z\circ ev_w^*$, for all
$z,w\in \Omega$, takes value in
$\mathcal{L}(E_w^*,E_z)$, and for
$\eta\in E_z^*$ and $\xi\in E_w^*$ we have
\begin{eqnarray*}
\langle\eta,K_E(z,w)\xi\rangle_{E_z}&=& \langle\eta, ev_z\circ
ev_w^*\xi\rangle_{E_z}= \langle ev_z^*\eta,  ev_w^*\xi\rangle_{L_a^2(E)}\\
&=& \langle ev_w\circ ev_z^*\eta,  \xi\rangle_{E_w}=\langle K_E(w,z)\eta,\xi\rangle_{E_w}\\
&=&\langle \eta,K_E(w,z)^*\xi\rangle_{E_z},
\end{eqnarray*}
which implies  $$K_E(z,w)=K_E(w,z)^*.$$ Moreover,
\[\langle f, K_E(\cdot,w)\xi\rangle_{L_a^2(E)}=\langle f, ev_w^*\xi\rangle_{L_a^2(E)}=\langle ev_w(f),\xi\rangle_{E_w}=\langle f(w),\xi\rangle_{E_w}.\]
Hence, $K_E$ is the reproducing kernel of $L_a^2(E)$.




There is another way to view the Bergman space over the flat vector bundle. In particular, one can view it as a space of functions on the universal covering space. Sometime this representation is quite useful.

\begin{defn}
Let $E$ be a vector bundle. A \emph{unitary coordinate covering} for $E$ is
a coordinate covering $\{U,\phi_U\}$ such that for each open set
$U$ and $z\in U$, the fiber map ${\phi_U}|_{E_z}:E_z\to
\{z\}\times \mathfrak{K}$ is unitary. The unitary coordinate
covering $\{U,\phi_U\}$ is said to be \textit{flat} if the matrix
valued functions $\phi_{UV}:U\cap V\to \mathcal{U}(\mathfrak{K})$
are constants, where $\phi_U\circ
\phi_V^{-1}(z,v)=(z,\phi_{UV}(z)v)$ for $z\in U\cap V$ and
$v\in\mathfrak{K}$. A \textit{flat} unitary vector bundle is a
vector bundle equipped with a flat unitary coordinate covering.
\end{defn}

A flat structure on a vector bundle over $\Omega$ gives rise to a
representation $$\alpha:\pi_1(\Omega)\to
\mathcal{U}(\mathfrak{K})$$ of the fundamental group of $\Omega$
via parallel displacement. Conversely, suppose we have
a representation $\alpha$ and let $\tilde{\Omega}$ be the universal
covering space of $\Omega$, we can construct a flat bundle as
follows:  the equivalence relation $(z_1,v_1)\sim (z_2,v_2)$ on $\tilde{\Omega}\times
\mathfrak{K}$ is defined as if
$z_2=A(z_1)$ and $v_2=\alpha(A)v_1$ for some $A\in\pi_1(\Omega)$.
This equivalence relation gives rise to a flat vector bundle
$E_{\alpha}=\tilde{\Omega}\times \mathfrak{K}/\sim$ with natural
projection $\pi$. The vector bundle $E_{\alpha}$ constructed above
is a flat unitary vector bundle. Discussions about flat vector
bundles can be found in \cite{ad, chen,kob}. The group
$\mathcal{U}(\mathfrak{K})$ acts on
$\mbox{Hom}(\pi_1(\Omega),\mathcal{U}(\mathfrak{K}))$ via
conjugation $(V,\alpha)\to V\alpha V^*$. The set of equivalence
classes is denoted by
$\mbox{Hom}(\pi_1(\Omega),\mathcal{U}(\mathfrak{K}))/\mathcal{U}(\mathfrak{K})$.
\begin{pro}(\cite{ad})There is one to one correspondence
between
\mbox{$\mbox{Hom}(\pi_1(\Omega),\mathcal{U}(\mathfrak{K}))/\mathcal{U}(\mathfrak{K})$}
and the set of equivalence classes of flat unitary vector bundles
over $\Omega$.
\end{pro}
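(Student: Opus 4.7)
The plan is to construct the bijection explicitly in both directions, show each is well-defined on equivalence classes, and verify they are mutually inverse.

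First, I would construct the map from bundles to representations. Given a flat unitary bundle $E$ over $\Omega$ with flat trivializations $\{U_\alpha,\phi_{U_\alpha}\}$ and constant unitary transition functions $\phi_{UV}\in\mathcal{U}(\mathfrak{K})$, fix a base point $z_0\in\Omega$ and an identification $E_{z_0}\cong\mathfrak{K}$ via some $\phi_{U_0}^{z_0}$. For a loop $\gamma:[0,1]\to\Omega$ based at $z_0$, use the Lebesgue number lemma to pick a partition $0=t_0<t_1<\cdots<t_N=1$ such that each $\gamma([t_{i-1},t_i])$ lies in a single trivialization $U_i$; define parallel transport along $\gamma$ as the composition of the transition maps $\phi_{U_iU_{i+1}}$, ending in $\mathcal{U}(\mathfrak{K})$. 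Constancy of the transition functions makes the result independent of the partition, and homotopy invariance follows because a homotopy of $\gamma$ can be refined into a finite chain of elementary moves each passing through a single chart. This produces a homomorphism $\alpha_E:\pi_1(\Omega,z_0)\to\mathcal{U}(\mathfrak{K})$. Changing the initial identification of $E_{z_0}$ with $\mathfrak{K}$ by a unitary $V$ replaces $\alpha_E$ by $V\alpha_E V^*$, so the class $[\alpha_E]$ in $\mbox{Hom}(\pi_1(\Omega),\mathcal{U}(\mathfrak{K}))/\mathcal{U}(\mathfrak{K})$ is independent of the chosen trivialization and base point (base point change also gives conjugation, by parallel transport along a path connecting the two points).

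Next, the reverse map is already given in the excerpt: $\alpha\mapsto E_\alpha=\widetilde{\Omega}\times\mathfrak{K}/\!\sim$. I would check that $E_\alpha$ inherits a canonical flat unitary structure: pick an evenly covered neighborhood $U$ of each $z\in\Omega$, choose a sheet $\widetilde{U}\subset\widetilde{\Omega}$, and define $\phi_U:\pi^{-1}(U)\to U\times\mathfrak{K}$ by sending the class of $(\tilde w,v)$ with $\tilde w\in\widetilde{U}$ to $(\pi(\tilde w),v)$. Two such charts differ by the action of some deck transformation $A\in\pi_1(\Omega)$, yielding constant transition function $\alpha(A)\in\mathcal{U}(\mathfrak{K})$. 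Replacing $\alpha$ by $V\alpha V^*$ gives a bundle isomorphism $E_\alpha\to E_{V\alpha V^*}$ via the map $[\tilde w,v]\mapsto[\tilde w,Vv]$, which respects the flat unitary structure, so the construction descends to equivalence classes.

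Finally, I would prove these two maps are mutually inverse. For $\alpha\mapsto E_\alpha\mapsto \alpha_{E_\alpha}$: compute the holonomy of $E_\alpha$ directly using the covering description. A loop $\gamma$ at $z_0$ lifts to a path $\tilde\gamma$ in $\widetilde{\Omega}$ ending at $A\cdot\tilde z_0$ where $A=[\gamma]\in\pi_1(\Omega)$; parallel transport of $[\tilde z_0,v]$ along $\gamma$ produces $[\tilde z_0,\alpha(A)v]$ under the fixed identification, giving back $\alpha$ on the nose. For $E\mapsto\alpha_E\mapsto E_{\alpha_E}$: construct a bundle map $E_{\alpha_E}\to E$ by choosing, for each $\tilde z\in\widetilde{\Omega}$, the parallel transport from $E_{z_0}$ to $E_{\pi(\tilde z)}$ along the image of any path from $\tilde z_0$ to $\tilde z$. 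Simple connectedness of $\widetilde{\Omega}$ makes this well-defined; the relation $(\tilde z_2,v_2)\sim(A\tilde z_1,\alpha_E(A)v_1)$ is precisely what is needed for the map to descend to the quotient, giving a unitary bundle isomorphism that intertwines the flat structures.

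The delicate step will be the homotopy invariance of parallel transport and the coherent definition of the chart-changes for $E_\alpha$; once these combinatorial arguments on the nerve of a good cover of $\Omega$ are laid out, the remaining verifications are bookkeeping. The main subtlety is keeping the choices of base points, lifts, and initial fiber identifications organized so that every ambiguity exactly matches a conjugation by an element of $\mathcal{U}(\mathfrak{K})$.
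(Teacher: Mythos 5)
Your proposal is correct and follows exactly the route the paper intends: the paper itself gives no proof (it cites Abrahamse--Douglas) but sketches both ingredients --- the representation obtained from a flat structure via parallel displacement, and the quotient construction $E_\alpha=\widetilde{\Omega}\times\mathfrak{K}/\!\sim$ --- and your argument is the standard fleshing-out of these into mutually inverse, well-defined maps on equivalence classes. The only point to pin down in a full write-up is the left/right action convention relating deck transformations to $\pi_1(\Omega)$, so that the holonomy of $E_\alpha$ is $\alpha$ itself rather than $A\mapsto\alpha(A)^{*}$; this is bookkeeping, not a gap.
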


For a flat unitary vector bundle $E$, the $m$-tuple
multiplication operator $(T_{z_1},\ldots,T_{z_m})$ on $L_a^2(E)$ is denoted
by $T_E$.

\begin{defn}
Let $X$ and $Y$ be two topological spaces. A continuous map $q:Y\to
X$ is a \emph{covering map} if there is a neighborhood
$U$ for each point $x\in X$ such that;
\begin{enumerate}
\item the set $q^{-1}(U)$ is a disjoint union of open sets $V_i$,
\item the restriction map of $q$ to $V_i$ is a homeomorphism of
$V_i$ onto $U$.
\end{enumerate}
\end{defn}

The group of covering transformations for $q$ is the group of
homeomorphisms $A$ of $Y$ such that $q\circ A=q$. Two covering
spaces $(Y_1,q_1)$ and $(Y_2,q_2)$ of $X$ are said to be
\emph{equivalent} if there is a homeomorphism $\phi$ from $Y_1$ onto
$Y_2$ such that $q_2\circ \phi=q_1$.

Let $q: \tilde{\Omega}\to \Omega$ be a universal covering map.
Let $J_{q}(z)=((\tfrac{\partial q_i}{\partial z_j}(z)))_{i,j=1}^{m}$ be Jacobin of $q=(q_1,\ldots,q_m)$. For $\alpha\in
\mbox{Hom}(\pi_1(\Omega),\mathcal{U}(\mathfrak{K}))$,
$L_a^2(\tilde{\Omega},\mathfrak{K},\alpha,|\det\,J_q|^2\,dV(z))$ is defined
to be space of functions $f$ in
$L_a^2(\tilde{\Omega},\mathfrak{K},|\det\,J_q(z)|^2\,dV(z))$ such that
\[\int_{\tilde{\Omega}}\|f(z)\|_{\mathfrak{K}}^2|\det\,J_q|^2\,dV(z)<\infty\]
and $f(A(z))=\alpha(A)f(z)$ for all $A\in\pi_1(\Omega)$ and
$z\in\tilde{\Omega}$. A bounded holomorphic function $\phi$
defined on $\tilde{\Omega}$ is said to be
\emph{$\pi_1(\Omega)$-automorphic} if $\phi\circ A=\phi$ for each $A$ in
$\pi_1(\Omega)$. Let $H^{\infty}(\tilde{\Omega},\pi_1(\Omega))$
denote the set of all $\pi_1(\Omega)$-automorphic bounded
holomorphic functions. The space
$L_a^2(\tilde{\Omega},\mathfrak{K},\alpha,|\det\,J_q|^2\,dV(z))$
is invariant for $H^{\infty}(\tilde{\Omega},\pi_1(\Omega))$, that
is, if $\phi\in H^{\infty}(\tilde{\Omega},\pi_1(\Omega))$ and
$f\in
L_a^2(\tilde{\Omega},\mathfrak{K},\alpha,|\det\,J_q|^2\,dV(z))$
then $\phi f\in
L_a^2(\tilde{\Omega},\mathfrak{K},\alpha,|\det\,J_q|^2\,dV(z))$.
In particular, it is invariant for multiplication by the
components $q_i, 1\leq i\leq m$, of covering map $q$ and we define
the operator $T_{\alpha}=(T_{\alpha_1},\ldots,T_{\alpha_m})$ on
$L_a^2(\tilde{\Omega},\mathfrak{K},\alpha,|\det\,J_q|^2\,dV(z))$
by $T_{\alpha_i}(f)=q_i f$, for $1\leq i\leq m$.

\begin{pro}\label{b1}
If $\alpha$ is in
$\mbox{Hom}(\pi_1(\Omega),\mathcal{U}(\mathfrak{K}))$ and
$E_{\alpha}$ is the flat unitary bundle over $\Omega$ determined
by $\alpha$, then $T_{\alpha}$ is unitarily equivalent to the
bundle shift $T_{E_{\alpha}}$.
\end{pro}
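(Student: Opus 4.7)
The plan is to construct an explicit unitary $U:L_a^2(\tilde{\Omega},\mathfrak{K},\alpha,|\det J_q|^2\,dV)\to L_a^2(E_\alpha)$ that implements the classical identification between $\alpha$-equivariant $\mathfrak{K}$-valued functions on the universal cover and holomorphic sections of the associated flat bundle, and then to verify that it intertwines the two multiplication tuples. Define $U$ by $(Uf)(w)=[\tilde w,f(\tilde w)]\in E_{\alpha,w}$ for any preimage $\tilde w\in q^{-1}(w)$. The equivariance $f\circ A=\alpha(A)f$ together with the defining relation $(A(\tilde w),v)\sim(\tilde w,\alpha(A^{-1})v)$ of $E_\alpha$ makes this independent of the choice of preimage, and on any open set $V\subset\Omega$ carrying a local inverse $s:V\to\tilde{\Omega}$ of $q$, the section $Uf$ is represented by $f\circ s$, which is holomorphic.

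Next I would verify the isometry. Cover $\Omega$ by evenly covered open sets $V_k$ with local inverses $s_k$ and form a fundamental domain $D=\bigsqcup s_k(V_k)$ for the $\pi_1(\Omega)$-action on $\tilde{\Omega}$, up to a measure-zero set. Because the local trivialization of $E_\alpha$ is unitary, $\|(Uf)(w)\|_{E_{\alpha,w}}=\|f(s_k(w))\|_\mathfrak{K}$ on $V_k$, and the holomorphic change of variables $w=q(\tilde w)$ yields $dV(w)=|\det J_q(\tilde w)|^2\,dV(\tilde w)$ along each sheet; hence
\[
\int_\Omega\|(Uf)(w)\|^2\,dV(w)=\int_D\|f(\tilde w)\|^2|\det J_q(\tilde w)|^2\,dV(\tilde w).
\]
The density $\|f\|^2|\det J_q|^2\,dV$ is $\pi_1(\Omega)$-invariant (from $q\circ A=q$, the chain rule, and the unitarity of $\alpha(A)$), so the right-hand side agrees with the prescribed norm of $f$. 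Surjectivity is obtained by reversing the construction: a holomorphic section of $E_\alpha$ pulls back through local trivializations to a holomorphic $\mathfrak{K}$-valued function on $\tilde{\Omega}$, and the locally constant transition functions $\alpha(A)$ force this lift to be $\alpha$-equivariant.

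The intertwining is then immediate: $U(q_if)(w)=[\tilde w,q_i(\tilde w)f(\tilde w)]=w_i[\tilde w,f(\tilde w)]=z_i(w)(Uf)(w)$, since $q_i(\tilde w)=w_i$ and the equivalence relation on $E_\alpha$ is $\mathbb{C}$-linear in the fiber, so $UT_{\alpha_i}=T_{z_i}U$ for each $i$. The main obstacle, more technical than conceptual, is the correct reading of the symbol $\int_{\tilde{\Omega}}$ appearing in the definition of the source Hilbert space: since $\|f\|^2|\det J_q|^2\,dV$ is $\pi_1(\Omega)$-invariant and $\tilde{\Omega}$ has infinite volume with respect to this density, that integral must be interpreted as the integral over a fundamental domain (equivalently, the push-forward to $\Omega$ under the covering map). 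Once this convention is installed, the weight $|\det J_q|^2$ is exactly the one that makes the pointwise change-of-variables formula on each evenly covered $V_k$ produce the Bergman norm of $Uf$ on $E_\alpha$, and the rest of the argument is routine.
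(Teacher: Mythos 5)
Your proof is correct and follows essentially the same route as the paper: the same explicit map $f\mapsto[(\tilde w,f(\tilde w))]$ from equivariant functions on the universal cover to sections of $E_\alpha$, and the same three verifications (isometry by change of variables through $q$, surjectivity by reversing the construction, and the pointwise intertwining computation). Your observation that the integral $\int_{\tilde\Omega}\|f\|^2|\det J_q|^2\,dV$ must be read as an integral over a fundamental domain (since the integrand is $\pi_1(\Omega)$-invariant) is a worthwhile clarification of a convention the paper leaves implicit.
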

\begin{proof}
Define a map
$F:L_a^2(\tilde{\Omega},\mathfrak{K},\alpha,|\det\,J_q|^2\,dV(z))\to
L_a^2(E_{\alpha})$ as  follows $$F(f)(w)= [(z,f(z))]\;\;\;
\mbox{for} \;\;z\in q^{-1}(w),\; w\in\Omega,$$ where $[(z,f(z))]$
denote equivalence class of $(z,f(z))$. It is easy to see that
$$\|F(f)(w)\|_{(E_{\alpha})_w}^2=\|f(z)\|^2_{\mathfrak{K}}.$$

\noindent (1) $F$ is an isometry:
\begin{eqnarray*}
\|F(f)\|^2_{L_a^2(E)}&=&
\int_{\Omega}\|F(f)(w)\|_{(E_{\alpha})_w}^2\;\;dV(w)\\
&=&\int_{\tilde{\Omega}}\|F(f)(q(z))\|_{(E_{\alpha})_w}^2|\det\;J_q(z)|^2\;\;dV(z)\\
&=&\int_{\tilde{\Omega}}\|f(z)||_{\mathfrak{K}}^2|\det\;J_q(z)|^2\;\;dV(z)\\
&=&\|f\|^2_{L_a^2(\tilde{\Omega},\mathfrak{K},\alpha,|\det\,J_q|^2\,dV(z))}
\end{eqnarray*}
(2) $F$ is onto: Let $s$ be a section of $E_{\alpha}$ such that
$s\in L_a^2(E_{\alpha})$. Define function
$f_s:\mathbb{D}\to\mathfrak{K}$ such that $s(q(z))=[(z,f_s(z))]$
for $z\in\tilde{\Omega}$. One can show that $f_s$ is holomorphic
and $q(z)=q(A(z))$ implies that $f_s(A(z))=\alpha(A)f_s(z)$ for
all $A\in\pi_1(\Omega)$ and $z\in\tilde{\Omega}$. One easily see
that
$$\|s\|^2_{L_a^2(E_{\alpha})}=\int_{\tilde{\Omega}}\|f_s(z)\|_{\mathfrak{K}}^2|\det\,J_q(z)|^2\;\;dV(z).$$ Hence $f_s\in
L_a^2(\tilde{\Omega},\mathfrak{K},\alpha,|\det\,J_q|^2\,dV(z))$
and $F(f_s)=s$.

\noindent(3) $F$ intertwines $T_{\alpha}$ and $T_{E_{\alpha}}$: for
$f\in
L_a^2(\tilde{\Omega},\mathfrak{K},\alpha,|\det\,J_q|^2\,dV(z))$,
$w\in\Omega$, $z\in q^{-1}(w)$ and $1\leq i\leq m$
\begin{eqnarray*}
(F\circ T_{\alpha_i})(f)(w)&=& F(q_i f)(w)=[(z,(q_i f)z)]=[(z,q_i(z)f(z))]\\
&=&[(z,w_i f(z))]=w_i[z,f(z)]=w_i F(f)(w)\\
&=& (T_{z_i}\circ F) (f)(w).
\end{eqnarray*}
Thus $F\circ T_{\alpha_i}=T_{z_i}\circ F$.
\end{proof}

\subsection{Bergman Bundle Shifts}
Although most of results in this part can be extended to multivariable cases or weighted Bergman spaces, we restrict ourself to one variable case for further use. Moreover, when the domain has nice boundary, we can follow the approach of Abrahamse and the the first author which we do in this section. In section 2.4, we show how to provide an alterative approach covering the general case.

 Let $R$ be a multiply connected domain in $\mathbb{C}$ whose boundary consists of $n+1$ analytic Jordan curves and $\tilde{q}:\mathbb{D}\to R$ be the covering
map. Let $G$ be the group of covering transformations of $\tilde{q}$.
The group $G$ is isomorphic to fundamental group of $R$.

A function $f$ on $\mathbb{D}$ is said to be \emph{$G$-automorphic} if
$f\circ A=f$ for each $A$ in $G$. Let
$L^2(\mathbb{D},\mathbb{C}^n, G)$ be a subspace of $G$-automorphic
functions in $L^2(\mathbb{D},\mathbb{C}^n)$ and let
$L_a^2(\mathbb{D},\mathbb{C}^n, G)$ be a subspace of $G$-automorphic functions in $L_a^2(\mathbb{D},\mathbb{C}^n)$. Let
$L^{\infty}(\mathbb{D},G)$ be the algebra of $G$-automorphic
functions in $L^{\infty}(\mathbb{D})$ and let
$H^{\infty}(\mathbb{D},G)$ be the algebra of $G$-automorphic
functions in $H^{\infty}(\mathbb{D})$.

\begin{pro}\label{inp1}
\begin{enumerate}
\item The smallest invariant subspace for
$L^{\infty}(\mathbb{D},G)$ containing
$L_a^2(\mathbb{D},\mathbb{C}^n,G)$ is
$L^2(\mathbb{D},\mathbb{C}^n,G)$, where $L^{\infty}(\mathbb{D},G)$ acts on $L^2(\mathbb{D},\mathbb{C}^n,G)$ pointwisely.

\item The smallest invariant subspace for $H^{\infty}(\mathbb{D})$
containing $L_a^2(\mathbb{D},\mathbb{C}^n,G)$ is
$L_a^2(\mathbb{D},\mathbb{C}^n)$, where $H^{\infty}(\mathbb{D})$ acts on $L_a^2(\mathbb{D},\mathbb{C}^n)$ pointwisely.

\item The smallest invariant subspace for $L^{\infty}(\mathbb{D})$
containing $L_a^2(\mathbb{D},\mathbb{C}^n,G)$ is
$L^2(\mathbb{D},\mathbb{C}^n)$, where $L^{\infty}(\mathbb{D})$ acts on $L^2(\mathbb{D},\mathbb{C}^n)$ pointwisely.
\end{enumerate}
\end{pro}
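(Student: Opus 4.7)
The plan is to handle all three parts by a single observation: the standard basis vectors $e_{1},\dots,e_{n}$ of $\mathbb{C}^{n}$, viewed as constant sections on $\mathbb{D}$, are trivially $G$-automorphic and holomorphic, and so lie in $L_{a}^{2}(\mathbb{D},\mathbb{C}^{n},G)$. Consequently each smallest invariant subspace in the proposition contains $\mathcal{A}\cdot e_{i}$ for every $i$, where $\mathcal{A}$ is the relevant acting algebra. Since each claimed target space factors as (scalar target) $\otimes\,\mathbb{C}^{n}$, the problem reduces in every case to a scalar density question: whether $\mathcal{A}$ is dense in the scalar-valued version of the target.

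In each part the inclusion $\subseteq$ is immediate. For (1), the product of two $G$-automorphic functions is $G$-automorphic and $L^{\infty}\cdot L^{2}\subseteq L^{2}$, so $L^{2}(\mathbb{D},\mathbb{C}^{n},G)$ is invariant under $L^{\infty}(\mathbb{D},G)$ and contains $L_{a}^{2}(\mathbb{D},\mathbb{C}^{n},G)$. For (2), products of holomorphic functions are holomorphic and $H^{\infty}\cdot L_{a}^{2}\subseteq L_{a}^{2}$. For (3), $L^{\infty}\cdot L^{2}\subseteq L^{2}$. So in all cases the space on the right is a legitimate invariant subspace containing the generating family.

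For the reverse inclusions, I would reduce via the seeds $e_{i}$ to three standard density facts. For (2), polynomials are dense in $L_{a}^{2}(\mathbb{D})$ and polynomials lie in $H^{\infty}(\mathbb{D})$, giving density of $H^{\infty}(\mathbb{D})\otimes \mathbb{C}^{n}$ in $L_{a}^{2}(\mathbb{D},\mathbb{C}^{n})$. For (3), simple functions are dense in $L^{2}(\mathbb{D})$, so $L^{\infty}(\mathbb{D})$ is dense in $L^{2}(\mathbb{D})$. For (1), I would identify $G$-automorphic $L^{p}$ functions on $\mathbb{D}$ with $L^{p}$ functions on $R$ via the covering map $\tilde{q}$ (using the pushforward measure, equivalently the Jacobian weight $|\det J_{\tilde{q}}|^{2}$ of Proposition~\ref{b1}); the claim then becomes $L^{\infty}(R)$ dense in $L^{2}(R)$, which again follows from simple function approximation.

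There is essentially no substantive obstacle. The only care needed is the bookkeeping in part (1): a $G$-automorphic $L^{2}$ function on $\mathbb{D}$ must be integrated over a fundamental domain (or over $R$ with the pushforward measure), since otherwise the space would collapse to zero for infinite $G$. Under the interpretation consistent with Proposition~\ref{b1}, the scalar density on $R$ transfers back to $\mathbb{D}$ and the three equalities follow uniformly from the constant-section argument.
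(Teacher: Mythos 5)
Your argument is correct and is essentially identical to the paper's: the paper's entire proof reads ``Since $L_a^2(\mathbb{D},\mathbb{C}^n,G)$ contains the constant functions, the proof is immediate,'' and you have simply made explicit the constant-section seeds $e_i$ and the three scalar density facts that make ``immediate'' true. One aside is slightly off: because $\mathbb{D}$ has finite area, the space of $G$-automorphic functions in $L^2(\mathbb{D},\mathbb{C}^n)$ does not collapse for infinite $G$ (it already contains the constants), so no fundamental-domain renormalization is forced --- and for part (1) one can also avoid the transfer to $R$ altogether by truncating a $G$-automorphic $L^2$ function to the $G$-invariant sets $\{|f|\leq N\}$ --- but your reading via the pushforward measure is a harmless equivalent.
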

\begin{proof}
Since $L_a^2(\mathbb{D},\mathbb{C}^n,G)$ contains the constant
functions, the proof is immediate.
\end{proof}

 An operator $T$ on
$L_{\mathbb{C}^n}^{2}(\mathbb{D})$ will be called to be \emph{decomposable} if
there is a weakly measurable function $w\mapsto T_{w}$ from
$\mathbb{D}$ to $B(\mathbb{C}^n)$ such that for $f\in
L_{\mathbb{C}^n}^{2}(\mathbb{D})$, $(Tf)(w)=T_{w}f(w)$, a.e. $dv$.
It is well known that an operator on
$L_{\mathbb{C}^n}^{2}(\mathbb{D})$ is decomposable if and only if
it commutes with $L^{\infty}(\mathbb{D})$. Moreover,
 the algebraic operations can be performed pointwisely, that is
 $[(T+S)f](w)=T_{w}f(w)+S_{w}f(w)$, $(T^{\ast}f)(w)=T_{w}^{\ast}f(w)$,
 etc., and the norm is $\|T\|=\mbox{ess}\,\sup\|T_{w}\|$.
 One can see \cite{d, d1} for further references.

\begin{pro}\label{dhp}
For every unitary representation $\alpha$ of $G$ on
$\mathbb{C}^n$, there is a decomposable operator $\Phi_{\alpha}$
on $L_{\mathbb{C}^n}^{2}(\mathbb{D})$ such that
$\Phi_{\alpha}(L^{2}_{a}(\mathbb{D},\mathbb{C}^n,G))=L_{a}^{2}(\mathbb{D},\mathbb{C}^n,\alpha)$.
Moreover, $\Phi_{\alpha}$ satisfies
$\Phi_{\alpha ,g(\lambda)}=\alpha(g)\Phi_{\alpha,\lambda}$ a.e.
$dm$ for every $g\in G$.
\end{pro}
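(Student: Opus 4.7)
The plan is to imitate the fundamental-domain construction of Abrahamse--Douglas, adapted to planar Lebesgue measure. Since $G$ acts freely and properly discontinuously on $\mathbb{D}$, I would fix an open fundamental domain $F\subset\mathbb{D}$ whose boundary has planar measure zero, so that $\mathbb{D}\setminus\bigcup_{g\in G}g(F)$ is a null set and a.e.\ $w\in\mathbb{D}$ admits a unique factorisation $w=g(\mu)$ with $g\in G$, $\mu\in F$. I would then define a measurable matrix-valued function by $\Phi_{\alpha,\mu}:=I_n$ for $\mu\in F$ and $\Phi_{\alpha,g(\mu)}:=\alpha(g)$ for $g\in G$, $\mu\in F$. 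Since $\alpha$ takes values in $\mathcal{U}(\mathbb{C}^n)$, this function is pointwise unitary and uniformly bounded, and the assignment satisfies $\Phi_{\alpha,g(\lambda)}=\alpha(g)\Phi_{\alpha,\lambda}$ a.e.\ by the homomorphism property of $\alpha$, establishing the second assertion of the proposition. Let $\Phi_\alpha$ denote the associated decomposable operator on $L^2_{\mathbb{C}^n}(\mathbb{D})$; pointwise unitarity makes it a unitary operator there.

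With $\Phi_\alpha$ in hand, I would verify the inclusion $\Phi_\alpha(L^2_a(\mathbb{D},\mathbb{C}^n,G))\subseteq L^2_a(\mathbb{D},\mathbb{C}^n,\alpha)$ by the direct calculation
\[(\Phi_\alpha f)(Aw)=\Phi_{\alpha,Aw}f(Aw)=\alpha(A)\Phi_{\alpha,w}f(w)=\alpha(A)(\Phi_\alpha f)(w)\]
a.e., valid for $f$ holomorphic and $G$-automorphic and any $A\in G$, which uses the automorphy of $\Phi_\alpha$ together with the $G$-invariance of $f$. The $L^2$-norm is preserved because $\Phi_{\alpha,\cdot}$ is pointwise unitary, so $\Phi_\alpha f\in L^2$. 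For the reverse inclusion, the adjoint decomposable operator $\Phi_\alpha^*$, with fibres $\Phi_{\alpha,w}^*$, satisfies $\Phi_{\alpha,g(\lambda)}^*=\Phi_{\alpha,\lambda}^*\alpha(g)^{-1}$, and the symmetric computation shows it carries $\alpha$-automorphic $L^2$-sections back to $G$-automorphic ones; combined with $\Phi_\alpha\Phi_\alpha^*=I$ this yields surjectivity.

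The genuine obstacle is verifying that the image of a holomorphic function represents a holomorphic function on all of $\mathbb{D}$, since $\Phi_{\alpha,\cdot}$ is piecewise constant and $\Phi_\alpha f$ is a priori only holomorphic in the interior of each translate $g(F)$, with potential jumps across $\partial g(F)$. To close this gap I would appeal to the bundle picture of Section 2.1 together with Proposition \ref{b1}: identifying $L^2_a(\mathbb{D},\mathbb{C}^n,\alpha)$ with $L^2_a(E_\alpha)$, the restriction $(\Phi_\alpha f)|_F=f|_F$ is holomorphic on $F$ and determines a holomorphic section of the flat bundle $E_\alpha$ over $R$; its pullback to $\mathbb{D}$ is the genuinely holomorphic $\alpha$-automorphic representative of the $L^2$-class of $\Phi_\alpha f$. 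With this identification understood, the chain of computations above yields the desired equality $\Phi_\alpha(L^2_a(\mathbb{D},\mathbb{C}^n,G))=L^2_a(\mathbb{D},\mathbb{C}^n,\alpha)$.
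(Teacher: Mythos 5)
There is a genuine gap, and it is exactly at the point you flag as ``the genuine obstacle.'' Your piecewise-constant $\Phi_{\alpha}$ (equal to $\alpha(g)$ on $g(F)$) is a perfectly good pointwise-unitary decomposable operator satisfying $\Phi_{\alpha,g(\lambda)}=\alpha(g)\Phi_{\alpha,\lambda}$ a.e., and it does map $L^{2}(\mathbb{D},\mathbb{C}^n,G)$ onto $L^{2}(\mathbb{D},\mathbb{C}^n,\alpha)$; but it does \emph{not} map $L^{2}_{a}(\mathbb{D},\mathbb{C}^n,G)$ into $L^{2}_{a}(\mathbb{D},\mathbb{C}^n,\alpha)$, and the proposed repair is false. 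Take $f\equiv v$ a constant vector, which lies in $L^{2}_{a}(\mathbb{D},\mathbb{C}^n,G)$. Then $\Phi_{\alpha}f$ equals $\alpha(g)v$ on $g(F)$, a locally constant function with genuine jumps across the boundaries of the translates of $F$; it is not a.e.\ equal to any holomorphic function. Moreover, no holomorphic $\alpha$-automorphic $h$ can agree with $v$ on the open set $F$: by the identity theorem $h$ would be constant on $\mathbb{D}$, forcing $\alpha(g)v=v$ for all $g$, which fails for nontrivial $\alpha$ and generic $v$. So the claim that $(\Phi_{\alpha}f)|_{F}=f|_{F}$ ``determines a holomorphic section of $E_{\alpha}$'' is incorrect: restriction to a fundamental domain does not identify holomorphic sections of the trivial bundle with holomorphic sections of $E_{\alpha}$; the two are different subspaces of $\mathcal{O}(F)^{n}$, distinguished precisely by the gluing conditions across $\partial F$ that your $\Phi_{\alpha}$ violates.

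The missing idea, which is the actual content of the paper's proof, is to make $\Phi_{\alpha}$ \emph{holomorphic} rather than pointwise unitary (one cannot have both, since a holomorphic unitary-valued function is locally constant). The paper extends the covering map to a neighborhood $\widetilde{\mathbb{D}}$ of $\overline{\mathbb{D}}$, forms the flat bundle $E_{\alpha}$ over the corresponding neighborhood $\widetilde{R}$ of $\overline{R}$, and invokes Grauert's theorem to trivialize it analytically; composing the quotient map $\widetilde{\mathbb{D}}\times\mathbb{C}^{n}\to E_{\alpha}$ with that trivialization yields a holomorphic $\Phi:\widetilde{\mathbb{D}}\to\mathcal{GL}(\mathbb{C}^{n})$ with $\Phi(g(z))=\alpha(g)\Phi(z)$, bounded with bounded inverse on $\overline{\mathbb{D}}$. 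Multiplication by such a $\Phi$ preserves holomorphy, converts $G$-automorphy into $\alpha$-automorphy, and is invertible on $L^{2}_{\mathbb{C}^n}(\mathbb{D})$, which gives the proposition. Your construction cannot be patched without introducing this (or an equivalent) holomorphic automorphic trivialization.
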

\begin{proof}
Note that the covering map $q:\mathbb{D}\rightarrow R$ can be
extended to a covering map from an open set
$\widetilde{\mathbb{D}}$ containing $\mathbb{D}$ to an open set
$\widetilde{R}$ containing the closure of $R$. Given $\alpha$, let $E_{\alpha}$
be the flat unitary bundle over $\widetilde{\mathbb{D}}$, and let
$\pi$ denote the projection from
$\widetilde{\mathbb{D}}\times\mathbb{C}^n$ onto $E_{\alpha}$.
$E_{\alpha}$ is analytically equivalent to the trivial bundle
$\widetilde{R}\times\mathbb{C}^n$, so there exists an analytic
isomorphism $\phi$ from $E_{\alpha}$ to
$\widetilde{R}\times\mathbb{C}^n$. It is clear that the
composition $\phi\circ \pi$ is a bundle map from
$\widetilde{\mathbb{D}}\times\mathbb{C}^n$ onto
$\widetilde{R}\times\mathbb{C}^n$ which yields an analytic
function $\Phi$ from $\widetilde{\mathbb{D}}$ to
$\mathcal{GL}(\mathbb{C}^n)$, the set of all invertible operators
on $\mathbb{C}^n$, such that $\Phi(g(z))=\alpha(g)\Phi(z)$ for all
$z\in\widetilde{\mathbb{D}}$ and $g\in G$. Let $\Phi_{\alpha}$ be
the restriction of $\Phi$ to the closed disk, it follows that
there is a $K>0$ such that $|\Phi_{\alpha}(z)|\leq K$ and
$|\Phi_{\alpha}(z)^{-1}|\leq K$ for all $z$ in the closed disk.
Let $f\in L^2_{\mathbb{C}^n}(\mathbb{D})$, consider
\begin{eqnarray*}
\int_{\mathbb{D}}|(\Phi_{\alpha} f)(z)|^2 dm&=&-\frac{1}{2\pi
i}\int_{\mathbb{D}}|(\Phi_{\alpha} f)(z)|^2dz\wedge d\bar{z}\\
&=& -\frac{1}{2\pi
i}\int_{\mathbb{D}}|\Phi_{\alpha}(z) f(z)|^2dz\wedge d\bar{z}\\
&\leq& -\frac{K}{2\pi
i}\int_{\mathbb{D}}|f(z)|^2dz\wedge d\bar{z}\\
&=& K\|f\|^2_{L^2_{\mathbb{C}^n(\mathbb{D})}}
\end{eqnarray*}

Hence, $\Phi_{\alpha}$ is an invertible decomposable operator on
$L^2_{\mathbb{C}^n(\mathbb{D})}$. Since $\Phi_{\alpha}$ is
analytic and $\Phi_{\alpha}(z)$ and $\Phi_{\alpha}(z)^{-1}$ are uniformly
bounded for $z\in \bar{\mathbb{D}}$, so we have
$\Phi_{\alpha}(L^{2}_{a}(\mathbb{D},\mathbb{C}^n,G))=L_{a}^{2}(\mathbb{D},\mathbb{C}^n,\alpha)$.
\end{proof}

\begin{pro}
\begin{enumerate}
\item The smallest invariant subspace for
$L^{\infty}(\mathbb{D},G)$ containing
$L_a^2(\mathbb{D},\mathbb{C}^n,\alpha)$ is
$L^2(\mathbb{D},\mathbb{C}^n,\alpha)$.

\item The smallest invariant subspace for $H^{\infty}(\mathbb{D})$
containing $L_a^2(\mathbb{D},\mathbb{C}^n,\alpha)$ is
$L_a^2(\mathbb{D},\mathbb{C}^n)$.

\item The smallest invariant subspace for $L^{\infty}(\mathbb{D})$
containing $L_a^2(\mathbb{D},\mathbb{C}^n,\alpha)$ is
$L^2(\mathbb{D},\mathbb{C}^n)$.
\end{enumerate}
\end{pro}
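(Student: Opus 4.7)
My plan is to bootstrap from Proposition \ref{inp1} by conjugating through the invertible decomposable operator $\Phi_{\alpha}$ supplied by Proposition \ref{dhp}. The three ingredients I will use are: first, $\Phi_{\alpha}$ is decomposable and therefore commutes with every operator of multiplication by an $L^{\infty}(\mathbb{D})$ function, in particular with $L^{\infty}(\mathbb{D},G)$ and $H^{\infty}(\mathbb{D})$; second, the fibrewise identity $\Phi_{\alpha,g(\lambda)}=\alpha(g)\Phi_{\alpha,\lambda}$ arranges that $\Phi_{\alpha}$ carries any $G$-automorphic function $f$ to an $\alpha$-automorphic one, since $(\Phi_{\alpha}f)(g(\lambda)) = \alpha(g)\Phi_{\alpha,\lambda}f(\lambda) = \alpha(g)(\Phi_{\alpha}f)(\lambda)$; and third, because $\Phi_{\alpha}$ and $\Phi_{\alpha}^{-1}$ are analytic on a neighborhood of $\overline{\mathbb{D}}$ with uniformly bounded norms, $\Phi_{\alpha}$ is a bounded invertible self-map of both $L^{2}(\mathbb{D},\mathbb{C}^{n})$ and $L_{a}^{2}(\mathbb{D},\mathbb{C}^{n})$.

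Together these give $\Phi_{\alpha}$ as a topological isomorphism $L^{2}(\mathbb{D},\mathbb{C}^{n},G)\to L^{2}(\mathbb{D},\mathbb{C}^{n},\alpha)$, extending the analogous $L_{a}^{2}$-statement already recorded in Proposition \ref{dhp}, and as a bijection of $L^{2}(\mathbb{D},\mathbb{C}^{n})$ and of $L_{a}^{2}(\mathbb{D},\mathbb{C}^{n})$ with themselves. Since $\Phi_{\alpha}$ commutes with each of the three multiplier algebras and carries $L_{a}^{2}(\mathbb{D},\mathbb{C}^{n},G)$ onto $L_{a}^{2}(\mathbb{D},\mathbb{C}^{n},\alpha)$, the smallest invariant subspace of each algebra generated by $L_{a}^{2}(\mathbb{D},\mathbb{C}^{n},\alpha)$ equals the image under $\Phi_{\alpha}$ of the corresponding smallest invariant subspace generated by $L_{a}^{2}(\mathbb{D},\mathbb{C}^{n},G)$.

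Applying Proposition \ref{inp1}: for (1) this produces $\Phi_{\alpha}L^{2}(\mathbb{D},\mathbb{C}^{n},G) = L^{2}(\mathbb{D},\mathbb{C}^{n},\alpha)$; for (2) it produces $\Phi_{\alpha}L_{a}^{2}(\mathbb{D},\mathbb{C}^{n}) = L_{a}^{2}(\mathbb{D},\mathbb{C}^{n})$; and for (3) it produces $\Phi_{\alpha}L^{2}(\mathbb{D},\mathbb{C}^{n}) = L^{2}(\mathbb{D},\mathbb{C}^{n})$, matching in each case the desired identification.

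The only steps that are genuinely new beyond Proposition \ref{dhp} are the pointwise automorphy identity promoting the $L_{a}^{2}$-level isomorphism to the $L^{2}$-level, and the observation that pointwise multiplication by an analytic $\mathcal{GL}(\mathbb{C}^{n})$-valued function with analytic bounded inverse sends $L_{a}^{2}(\mathbb{D},\mathbb{C}^{n})$ onto itself. Neither poses a serious obstacle, so I do not foresee any real difficulty; the proposition is essentially a formal corollary once these transfer properties are noted.
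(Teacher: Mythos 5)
Your proposal is correct and is exactly the argument the paper intends: the paper's proof is the one-line remark that the result ``follows from Proposition \ref{inp1} and Proposition \ref{dhp},'' and your conjugation by the invertible decomposable operator $\Phi_{\alpha}$ (using its automorphy identity to pass from $G$-automorphic to $\alpha$-automorphic functions, and its bounded analytic invertibility to preserve $L^{2}(\mathbb{D},\mathbb{C}^{n})$ and $L_{a}^{2}(\mathbb{D},\mathbb{C}^{n})$) is precisely the intended way to transfer Proposition \ref{inp1} to the $\alpha$-automorphic setting. You have simply written out the details the paper leaves implicit.
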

\begin{proof}
The proof follows from Proposition \ref{inp1} and Proposition
\ref{dhp}.
\end{proof}

\begin{lem}
An operator on $L^{2}(\mathbb{D},\mathbb{C}^n,G)$ commutes with
$L^{\infty}(\mathbb{D},G)$ if and only if it is a $G$-automorphic
decomposable operator.
\end{lem}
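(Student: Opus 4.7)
The plan is to prove each implication separately, exploiting the correspondence between $G$-automorphic objects on $\mathbb{D}$ and objects on the quotient $R=\mathbb{D}/G$.

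For the easy direction, suppose $T$ is a $G$-automorphic decomposable operator, given by a weakly measurable field $w\mapsto T_w$ with $(Tf)(w)=T_w f(w)$ and $T_{g(w)}=T_w$ for every $g\in G$ and a.e.\ $w$. If $\phi\in L^{\infty}(\mathbb{D},G)$ and $f\in L^{2}(\mathbb{D},\mathbb{C}^n,G)$, both $\phi f$ and $Tf$ remain $G$-automorphic, and
\[
(T(\phi f))(w)=T_w\,\phi(w)\,f(w)=\phi(w)\,T_w f(w)=(\phi\,Tf)(w),
\]
so $T$ commutes with multiplication by $\phi$.

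For the converse, I would use that the covering map $\tilde{q}:\mathbb{D}\to R$ produces a unitary identification $U:L^{2}(\mathbb{D},\mathbb{C}^n,G)\to L^{2}(R,\mathbb{C}^n,\mu_R)$ for a suitable measure $\mu_R$ on $R$, and simultaneously identifies $L^{\infty}(\mathbb{D},G)$ with $L^{\infty}(R)$ as von Neumann algebras. Under this identification, $\tilde T:=UTU^{\ast}$ commutes with $L^{\infty}(R)$, so by the classical theorem that the commutant of the multiplication algebra on a vector-valued $L^2$ space is precisely the algebra of decomposable operators (cf.\ \cite{d,d1}), there is a weakly measurable field $w\mapsto\tilde T_w$ on $R$ with $(\tilde T h)(w)=\tilde T_w h(w)$. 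I would then set $T_z:=\tilde T_{\tilde q(z)}$ for $z\in\mathbb{D}$. The resulting field is measurable, tautologically satisfies $T_{g(z)}=T_z$ for every $g\in G$, and realizes $T$ as the restriction of a $G$-automorphic decomposable operator on $L^{2}(\mathbb{D},\mathbb{C}^n)$, since for $f$ corresponding to $h$ under $U$ one has $(Tf)(z)=(\tilde T h)(\tilde q(z))=\tilde T_{\tilde q(z)}h(\tilde q(z))=T_z f(z)$.

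The main obstacle is producing the identification $U$ together with the companion identification of multiplier algebras. I would carry this out by fixing a Borel fundamental domain $F\subset\mathbb{D}$ for the action of $G$ on which $\tilde q$ is injective, defining $\mu_R$ as the push-forward of the ambient measure on $F$ via $\tilde q|_F$, and verifying that the map $f\mapsto (f|_F)\circ(\tilde q|_F)^{-1}$ is an isometric isomorphism from $L^{2}(\mathbb{D},\mathbb{C}^n,G)$ onto $L^{2}(R,\mathbb{C}^n,\mu_R)$ that intertwines multiplication by $\phi\in L^{\infty}(\mathbb{D},G)$ with multiplication by $\phi\circ(\tilde q|_F)^{-1}\in L^{\infty}(R)$. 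Once this bookkeeping is complete, the decomposability on the base $R$ together with the pull-back step supplies the desired field on $\mathbb{D}$ and the lemma follows.
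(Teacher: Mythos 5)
Your proof is correct and takes essentially the same route as the paper, which simply invokes the isometric isomorphism between $L^{2}(\mathbb{D},\mathbb{C}^n,G)$ and $L^{2}(R,\mathbb{C}^n)$ together with the Dixmier commutant theorem for decomposable operators; you are just making explicit the fundamental-domain identification and the pull-back of the measurable field. One minor bookkeeping point: for $U$ to be genuinely isometric the measure $\mu_R$ should be the push-forward of the Poincar\'e-weighted measure $\bigl(\sum_{g\in G}|g'(z)|^{2}\bigr)\,dV(z)$ on $F$ rather than of plain area measure, but since any equivalent $\sigma$-finite measure yields the same multiplication algebra and the same commutant, this does not affect the argument.
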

\begin{proof}
The comnutant of $L^{\infty}(\Omega)$ defined on
$L^{2}(\Omega,\mathbb{C}^n)$ consists of decomposable operators on
$L^{2}(\Omega,\mathbb{C}^n)$ \cite{d,d1}. Since
$L^{2}(\mathbb{D},\mathbb{C}^n,G)$ is isometrically isomorphic to
$L^{2}(\Omega,\mathbb{C}^n)$, the lemma follows.
\end{proof}

\begin{pro}\label{dt}
An operator $W:L^{2}(\mathbb{D},\mathbb{C}^n,\alpha)\rightarrow
L^{2}(\mathbb{D},\mathbb{C}^n,\beta)$ intertwines
$L^{\infty}(\mathbb{D},G)$ if and only if $W$ is decomposable and
$W_{g(\lambda)}=\beta(g)W_{\lambda}\alpha(g)^{\ast}$ a.e. $dv$ for
all $g\in G$.
\end{pro}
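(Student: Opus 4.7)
The plan is to reduce the assertion to the previous lemma (the case $\alpha=\beta=\mathbf{1}$) by conjugating $W$ through the decomposable automorphy intertwiners $\Phi_\alpha$ and $\Phi_\beta$ supplied by Proposition~\ref{dhp}. In the reverse direction the argument is a direct calculation verifying that the pointwise formula defines an operator between the right spaces and commutes with scalar multiplications.

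First I would handle the easy direction. Assume $W$ is decomposable with $W_{g(\lambda)}=\beta(g)W_{\lambda}\alpha(g)^{\ast}$. For $f\in L^{2}(\mathbb{D},\mathbb{C}^n,\alpha)$ compute
\begin{eqnarray*}
(Wf)(g(\lambda)) = W_{g(\lambda)}f(g(\lambda)) = \beta(g)W_{\lambda}\alpha(g)^{\ast}\alpha(g)f(\lambda) = \beta(g)(Wf)(\lambda),
\end{eqnarray*}
so $Wf\in L^{2}(\mathbb{D},\mathbb{C}^n,\beta)$. Since any decomposable $W$ commutes pointwise with scalar multiplication by $\phi\in L^{\infty}(\mathbb{D})\supset L^{\infty}(\mathbb{D},G)$, we get $WM_\phi=M_\phi W$.

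For the nontrivial direction, assume $W$ intertwines $L^{\infty}(\mathbb{D},G)$, and form
\[
\widetilde{W}:=\Phi_{\beta}^{-1}W\Phi_{\alpha}:L^{2}(\mathbb{D},\mathbb{C}^n,G)\rightarrow L^{2}(\mathbb{D},\mathbb{C}^n,G).
\]
Because $\Phi_{\alpha}$ and $\Phi_{\beta}$ are decomposable they commute with multiplication by any scalar $\phi\in L^{\infty}(\mathbb{D},G)$, so the intertwining hypothesis on $W$ transports to
\[
\widetilde{W}M_\phi=\Phi_{\beta}^{-1}WM_\phi \Phi_{\alpha}=\Phi_{\beta}^{-1}M_\phi W\Phi_{\alpha}=M_\phi\widetilde{W}.
\]
By the preceding lemma, $\widetilde{W}$ is then a $G$-automorphic decomposable operator, i.e.\ $\widetilde{W}_{g(\lambda)}=\widetilde{W}_{\lambda}$ a.e. Unwinding the definition gives $W_{\lambda}=\Phi_{\beta,\lambda}\widetilde{W}_{\lambda}\Phi_{\alpha,\lambda}^{-1}$, which is manifestly decomposable, and using the automorphy relations $\Phi_{\alpha,g(\lambda)}=\alpha(g)\Phi_{\alpha,\lambda}$ and $\Phi_{\beta,g(\lambda)}=\beta(g)\Phi_{\beta,\lambda}$ from Proposition~\ref{dhp} one finds
\[
W_{g(\lambda)}=\beta(g)\Phi_{\beta,\lambda}\widetilde{W}_{\lambda}\Phi_{\alpha,\lambda}^{-1}\alpha(g)^{-1}=\beta(g)W_{\lambda}\alpha(g)^{\ast},
\]
where unitarity of $\alpha(g)$ has been used in the last step.

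The only subtle point is ensuring that the conjugation by $\Phi_{\alpha},\Phi_{\beta}$ is legitimate, which hinges on their invertibility as decomposable operators (already established in the proof of Proposition~\ref{dhp} via uniform boundedness of $\Phi_{\alpha}$ and its inverse on $\overline{\mathbb{D}}$) and on the fact that these operators preserve the automorphy subspaces, which is exactly what their pointwise transformation law encodes. Once these bookkeeping items are in place, the argument above is essentially mechanical.
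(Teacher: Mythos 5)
Your proof is correct and follows essentially the same route as the paper: conjugate $W$ by the decomposable operators $\Phi_{\alpha},\Phi_{\beta}$ from Proposition~\ref{dhp}, invoke the preceding lemma to identify $\Phi_{\beta}^{-1}W\Phi_{\alpha}$ as a $G$-automorphic decomposable operator, and unwind using the transformation laws $\Phi_{\alpha,g(\lambda)}=\alpha(g)\Phi_{\alpha,\lambda}$. The only difference is that you spell out the converse direction and the transport of the intertwining relation, which the paper dismisses as obvious.
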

\begin{proof}
Given $W$, the operator $(\Phi_{\beta})^{-1}W\Phi_{\alpha}$ on
$L^{2}(\mathbb{D},\mathbb{C}^n,G)$ commutes with
$L^{\infty}(\mathbb{D},G)$. Thus, by the lemma, there is a
$G$-automorphic decomposable operator $V$ such that
$V=(\Phi_{\beta})^{-1}W\Phi_{\alpha}$. Hence
$W=\Phi_{\beta}V(\Phi_{\alpha})^{-1}$ and thus $W$ is
decomposable. Moreover,
\begin{eqnarray*}W_{g(\lambda)}&=&(\Phi_{\beta,g(\lambda)})V_{g(\lambda)}(\Phi_{\alpha,g(\lambda)})^{-1}
\\&=&\beta(g)\Phi_{\beta,\lambda}V_{\lambda}(\alpha(g)\Phi_{\alpha,\lambda})^{-1}\\
&=&\beta(g)\big(\Phi_{\beta,\lambda}V_{\lambda}(\Phi_{\alpha,\lambda})^{-1}\big)(\alpha(g))^{-1}\\
&=&\beta(g)W_{\lambda}\alpha(g)^{\ast}.\end{eqnarray*} The
sufficient condition is obivious.
\end{proof}

We state a series of lemmas, culminating in characterization of reducing subspaces of Bergman bundle shifts.

\begin{lem}\label{lh1}
If a decomposable operator $T$ on $L^{2}(\mathbb{D},\mathbb{C}^n)$
such that $T(L_{a}^{2}(\mathbb{D},\mathbb{C}^n))\subseteq
L_{a}^{2}(\mathbb{D},\mathbb{C}^n)$ then $F: \mathbb{D}\to
\mathbb{C}^n$, defined as $F(\lambda)= T_{\lambda}u$, is a
holomorphic map, for each fixed $u\in \mathbb{C}^n$.
\end{lem}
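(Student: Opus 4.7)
The plan is to apply $T$ to constant $\mathbb{C}^n$-valued functions and exploit both the decomposability formula $(Tf)(\lambda)=T_\lambda f(\lambda)$ and the hypothesis that $T$ preserves the Bergman space. Fix $u\in\mathbb{C}^n$ and consider the constant function $f_u(\lambda)\equiv u$. Since constants are bounded holomorphic functions on the bounded domain $\mathbb{D}$, we have $f_u\in L^2_a(\mathbb{D},\mathbb{C}^n)$.

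By hypothesis, $Tf_u\in L^2_a(\mathbb{D},\mathbb{C}^n)$, so there is a holomorphic function $g_u:\mathbb{D}\to\mathbb{C}^n$ which represents $Tf_u$, i.e.\ $(Tf_u)(\lambda)=g_u(\lambda)$ for a.e.\ $\lambda\in\mathbb{D}$. On the other hand, decomposability yields $(Tf_u)(\lambda)=T_\lambda f_u(\lambda)=T_\lambda u$ for a.e.\ $\lambda\in\mathbb{D}$. Equating the two representatives gives $T_\lambda u=g_u(\lambda)$ a.e.\ $dv$. Thus the equivalence class of $F(\lambda)=T_\lambda u$ contains the holomorphic representative $g_u$, and identifying $F$ with this canonical representative (as is standard when working with the pointwise action of a decomposable operator on a Bergman-type space) we conclude that $F$ is holomorphic on $\mathbb{D}$.

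The argument is essentially immediate once the definitions are unwound, and there is no substantial obstacle. The only subtle point worth flagging is that the family $\{T_\lambda\}$ is determined only up to a $dv$-null set, so strictly speaking the conclusion is that the equivalence class of $\lambda\mapsto T_\lambda u$ admits a holomorphic representative; after modifying $T_\lambda$ on a null set (for all $u$ simultaneously, using a basis $e_1,\dots,e_n$ of $\mathbb{C}^n$ and linearity in $u$), one obtains a genuine holomorphic function as in the statement.
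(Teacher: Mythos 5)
Your proof is correct and follows essentially the same route as the paper: apply $T$ to the constant function $f_u\equiv u$, use decomposability to identify $(Tf_u)(\lambda)$ with $T_\lambda u=F(\lambda)$, and conclude holomorphy of $F$ from the hypothesis that $Tf_u$ lies in $L^2_a(\mathbb{D},\mathbb{C}^n)$. Your added remark about the a.e.\ ambiguity of the family $\{T_\lambda\}$ is a reasonable refinement of the same argument, not a different approach.
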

\begin{proof}
For $u\in \mathbb{C}^n$, consider the constant function $f(\lambda)=u$
for all $\lambda\in \mathbb{D}$. Clearly, $f\in
L_{a}^{2}(\mathbb{D},\mathbb{C}^n)$. By the given condition,
$T(f)\in L_{a}^{2}(\mathbb{D},\mathbb{C}^n)$. In particular,
$T(f)$ is a holomorphic function on $\mathbb{D}$. For any
$\lambda_0\in \mathbb{D}$,
$\lim\limits_{\lambda\to\lambda_0}\tfrac{T(f)(\lambda)-T(f)(\lambda_0)}{\lambda-\lambda_0}$
exists. Now,
\begin{eqnarray*}
\tfrac{T(f)(\lambda)-T(f)(\lambda_0)}{\lambda-\lambda_0}&=&
\tfrac{T_{\lambda}(f(\lambda))-T_{\lambda_0}(f(\lambda_0))}{\lambda-\lambda_0}\\
&=& \tfrac{T_{\lambda}(u)-T_{\lambda_0}(u)}{\lambda-\lambda_0}\\
&=& \tfrac{F(\lambda)-F(\lambda_0)}{\lambda-\lambda_0}.
\end{eqnarray*}
Hence
$\lim\limits_{\lambda\to\lambda_0}\tfrac{F(\lambda)-F(\lambda_0)}{\lambda-\lambda_0}$
exists. Thus $F$ is a holomorphic function on $\mathbb{D}$.
\end{proof}

\begin{lem}\label{lh2}
If a decomposable operator $T$ on $L^{2}(\mathbb{D},\mathbb{C}^n)$
such that $T^*(L_{a}^{2}(\mathbb{D},\mathbb{C}^n))\subseteq
L_{a}^{2}(\mathbb{D},\mathbb{C}^n)$ then $\tilde{F}: \mathbb{D}\to
\mathbb{C}^n$, defined as $\tilde{F}(\lambda)= T_{\lambda}^*u$, is
a holomorphic map, for each fixed $u\in \mathbb{C}^n$.
\end{lem}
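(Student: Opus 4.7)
The plan is to mirror the argument of Lemma \ref{lh1} essentially verbatim, with the adjoint taking the place of the operator itself. The only conceptual ingredient that does not appear in Lemma \ref{lh1} is the observation that the adjoint of a decomposable operator is again decomposable, with fibre operators given by the pointwise adjoints. This follows from the standard fact that the decomposable operators are exactly those commuting with $L^{\infty}(\mathbb{D})$ acting by multiplication: since $L^{\infty}(\mathbb{D})$ is a $\ast$-algebra of multiplication operators, if $T$ commutes with every $M_{\varphi}$, then $T^{\ast}$ commutes with every $M_{\bar{\varphi}}$, hence with all of $L^{\infty}(\mathbb{D})$. Thus $T^{\ast}$ is decomposable and $(T^{\ast})_{\lambda}=(T_{\lambda})^{\ast}$ a.e.

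Once this is in hand, the proof proceeds exactly as in Lemma \ref{lh1}. For a fixed $u\in\mathbb{C}^n$, I take the constant section $f(\lambda)=u$, which lies in $L_a^{2}(\mathbb{D},\mathbb{C}^n)$ since $\mathbb{D}$ has finite volume. By the hypothesis $T^{\ast}(L_a^{2}(\mathbb{D},\mathbb{C}^n))\subseteq L_a^{2}(\mathbb{D},\mathbb{C}^n)$, the function $T^{\ast}f$ is holomorphic on $\mathbb{D}$. Using the decomposability identity $(T^{\ast}f)(\lambda)=T_{\lambda}^{\ast}f(\lambda)=T_{\lambda}^{\ast}u=\tilde{F}(\lambda)$ a.e., I conclude that the a.e.-defined map $\tilde{F}$ agrees a.e. with a holomorphic function, so after modifying on a null set $\tilde{F}$ itself is holomorphic on $\mathbb{D}$.

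I don't expect any real obstacle here; the only minor subtlety is the passage from the a.e.\ equality $(T^{\ast}f)(\lambda)=T_{\lambda}^{\ast}u$ to pointwise holomorphy of $\tilde{F}$, which is handled by the usual convention that the decomposable representative $\lambda\mapsto T_{\lambda}^{\ast}$ is defined up to a null set, so $\tilde{F}$ is well-defined a.e.\ and the unique holomorphic representative is then taken as the definition of $\tilde{F}$. Alternatively, one could simply invoke Lemma \ref{lh1} applied to the decomposable operator $T^{\ast}$, which shortcuts the argument entirely.
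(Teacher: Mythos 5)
Your proposal is correct and matches the paper's approach: the paper's proof of Lemma \ref{lh2} is literally ``the proof is same as proof of Lemma \ref{lh1},'' i.e.\ apply the constant-function argument to $T^{\ast}$, using the fact (already recorded in the paper's discussion of decomposable operators) that $(T^{\ast}f)(w)=T_{w}^{\ast}f(w)$. Your extra care about the a.e.\ identification of $\tilde{F}$ with its holomorphic representative is a reasonable elaboration but not a departure from the paper's argument.
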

\begin{proof}
The proof is same as proof of Lemma \ref{lh1}.
\end{proof}

\begin{lem}\label{ct}
A decomposable operator $T$ on $L^{2}(\mathbb{D},\mathbb{C}^n)$
commutes with the projection onto
$L_{a}^{2}(\mathbb{D},\mathbb{C}^n)$ if and only if it is a
constant.
\end{lem}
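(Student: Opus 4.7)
The easy direction is that a constant decomposable operator $T_\lambda \equiv C$ on $L^2(\mathbb{D}, \mathbb{C}^n)$ certainly satisfies $T(L_a^2(\mathbb{D},\mathbb{C}^n)) \subseteq L_a^2(\mathbb{D},\mathbb{C}^n)$ and $T^\ast(L_a^2(\mathbb{D},\mathbb{C}^n)) \subseteq L_a^2(\mathbb{D},\mathbb{C}^n)$, because applying a constant matrix $C$ or $C^\ast$ to a holomorphic vector-valued function produces another holomorphic vector-valued function. A bounded operator that preserves the range of an orthogonal projection and whose adjoint also preserves that range commutes with the projection; so this direction is immediate.

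For the nontrivial direction I would assume $TP = PT$, where $P$ is the orthogonal projection onto $L_a^2(\mathbb{D},\mathbb{C}^n)$. This single relation gives both $T(L_a^2) \subseteq L_a^2$ (because $P$ commutes with $T$ and $P$ is the identity on the range) and, taking adjoints, $T^\ast(L_a^2) \subseteq L_a^2$. Now Lemma \ref{lh1} applied to $T$ gives that $\lambda \mapsto T_\lambda u$ is holomorphic on $\mathbb{D}$ for every fixed $u \in \mathbb{C}^n$, and Lemma \ref{lh2} applied to $T$ (whose adjoint $T^\ast$ also preserves $L_a^2$) gives that $\lambda \mapsto T_\lambda^\ast u$ is holomorphic on $\mathbb{D}$ for every fixed $u \in \mathbb{C}^n$.

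Fixing the standard basis $\{e_1,\dots,e_n\}$ of $\mathbb{C}^n$, the matrix entries $a_{ij}(\lambda) := \langle T_\lambda e_j, e_i\rangle$ are holomorphic in $\lambda$ by the first observation applied to $u = e_j$ and pairing with $e_i$. Similarly the entries of $T_\lambda^\ast$ are holomorphic, but these entries are precisely $\overline{a_{ji}(\lambda)}$. Hence each $a_{ji}$ is both holomorphic and antiholomorphic on the connected set $\mathbb{D}$, which forces $a_{ij}$ to be constant. Therefore $T_\lambda$ agrees almost everywhere with a fixed matrix $C \in M_n(\mathbb{C})$, i.e.\ $T$ is a constant decomposable operator.

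The only subtle point is that the fibers $T_\lambda$ are defined only up to a set of measure zero, so strictly speaking the holomorphic representatives produced by Lemmas \ref{lh1} and \ref{lh2} agree with the a.e.\ definition only off an exceptional null set $N_u$ that depends on $u$. This is handled by restricting to a countable dense set of $u$'s (say vectors with rational coordinates) and taking the union of the exceptional sets; this gives a single null set off of which the matrix entries of $T_\lambda$ genuinely coincide with the holomorphic/antiholomorphic representatives, and the conclusion follows. I expect this measurability bookkeeping to be the only place that needs care; the analytic heart of the argument is the standard fact that a function holomorphic and antiholomorphic on a connected open set is constant.
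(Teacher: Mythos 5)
Your proposal is correct and follows essentially the same route as the paper: both directions are handled identically, and the core of the argument is exactly the paper's use of Lemmas \ref{lh1} and \ref{lh2} to show that $\lambda\mapsto\langle T_{\lambda}u,v\rangle$ is simultaneously holomorphic and antiholomorphic, hence constant. Your extra remark about reconciling the a.e.-defined fibers with the holomorphic representatives via a countable dense set of vectors is a reasonable refinement that the paper passes over silently.
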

\begin{proof}
It is sufficient clearly. On the other side, assume that
$L_{a}^{2}(\mathbb{D},\mathbb{C}^n)$ reduces $T$, that is,
$T(L_{a}^{2}(\mathbb{D},\mathbb{C}^n))\subseteq
L_{a}^{2}(\mathbb{D},\mathbb{C}^n)$ and
$T^*(L_{a}^{2}(\mathbb{D},\mathbb{C}^n))\subseteq
(L_{a}^{2}(\mathbb{D},\mathbb{C}^n))$.

 Fix $u$ and $v$ in
$\mathbb{C}^{n}$ and define $h(\lambda)=\langle
T_{\lambda}u,v\rangle$. Since $L_{a}^{2}(\mathbb{D},\mathbb{C}^{n})$
is invariant for $T$, so by Lemma \ref{lh1}, the function $h$ is
holomorphic on $\mathbb{D}$. Similarly, since
$L_{a}^{2}(\mathbb{D},\mathbb{C}^{n})$ is invariant for $T^{\ast}$,
so by Lemma \ref{lh2}, the function
$\tilde{h}(\lambda)=\overline{h(\lambda)}$ is holomorphic on
$\mathbb{D}$. Hence, $h$ is a constant function on $\mathbb{D}$.
Since $u$ and $v$ are arbitrary, so  $\lambda \mapsto T_{\lambda}$
is a constant function from $\mathbb{D}$ to
$\mathcal{L}(\mathbb{C}^n)$ also.
\end{proof}

\begin{thm}\label{b2}
The operators $T_{\alpha}$ and $T_{\beta}$ are unitarily
equivalent if and only if $\alpha$ and $\beta$ are unitarily
equivalent.
\end{thm}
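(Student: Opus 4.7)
Given a unitary $V\in\mathcal{U}(\mathbb{C}^n)$ with $V\alpha(g)=\beta(g)V$ for every $g\in G$, I would define $U\colon L_a^2(\mathbb{D},\mathbb{C}^n,\alpha)\to L_a^2(\mathbb{D},\mathbb{C}^n,\beta)$ pointwise by $(Uf)(z):=Vf(z)$. The automorphy of $Uf$, its isometry, and its invertibility all follow from the corresponding properties of $V$; multiplication by the scalar function $q$ commutes with the pointwise action of $V$, so $UT_\alpha=T_\beta U$.

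\textbf{Necessary direction.} Assume $U$ is a unitary with $UT_\alpha=T_\beta U$. My aim is to show that $U$ is pointwise multiplication by a \emph{constant} unitary $V\in\mathcal{U}(\mathbb{C}^n)$; the automorphy relation coming from Proposition~\ref{dt} will then force $V\alpha(g)=\beta(g)V$ for every $g\in G$.

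First I would transfer to the trivial bundle by forming $\widetilde U:=\Phi_\beta^{-1}U\Phi_\alpha$, a bounded invertible operator on $L_a^2(\mathbb{D},\mathbb{C}^n,G)$ by Proposition~\ref{dhp}. Since $\Phi_\alpha$ and $\Phi_\beta$ are decomposable and $q$ is scalar-valued, $\widetilde U$ commutes with multiplication by $q$. Under the natural identification $L_a^2(\mathbb{D},\mathbb{C}^n,G)\cong L_a^2(R,\mathbb{C}^n)$, this becomes multiplication by the coordinate function on $R$. Invoking the commutant theorem for coordinate multiplication on this vector-valued Bergman space, I would conclude $\widetilde U=M_\Psi$ for some invertible $\Psi\in H^\infty(R,M_n(\mathbb{C}))$. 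Transferring back to $\mathbb{D}$, the operator $U$ acts pointwise by the holomorphic matrix function
\begin{align*}
W(z):=\Phi_\beta(z)\,\Psi(q(z))\,\Phi_\alpha(z)^{-1}.
\end{align*}
This exhibits $U$ as decomposable; it extends canonically to a bounded operator $L^2(\mathbb{D},\mathbb{C}^n,\alpha)\to L^2(\mathbb{D},\mathbb{C}^n,\beta)$ intertwining $L^\infty(\mathbb{D},G)$, and Proposition~\ref{dt} then yields $W(g(z))=\beta(g)W(z)\alpha(g)^*$ for every $g\in G$ and almost every $z$. Unitarity of $U$ forces $W(z)$ to be a unitary matrix for a.e.\ $z$, whence $W(z)^{-1}=W(z)^*$; the function $z\mapsto W(z)^{-1}$ is holomorphic (as the inverse of a holomorphic invertible matrix function), while $z\mapsto W(z)^*$ is antiholomorphic, so $W$ is constant on the connected disk. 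Writing $W\equiv V$ and substituting in the automorphy relation gives $V\alpha(g)=\beta(g)V$, the desired unitary equivalence. Alternatively, once $W$ is decomposable, Lemma~\ref{lh1} applied to $M_W$ and Lemma~\ref{lh2} applied to $M_{W^*}$ (the latter after running the same Step~1 argument on $U^{-1}$) yield constancy via the scheme of Lemma~\ref{ct}.

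\textbf{Main obstacle.} The delicate step is the identification of the commutant of coordinate multiplication on $L_a^2(R,\mathbb{C}^n)$ with $H^\infty(R,M_n(\mathbb{C}))$ acting pointwise. This is classical on $R=\mathbb{D}$, but it requires some adaptation for the multiply connected $R$ with analytic Jordan boundary assumed here. Once it is in place, the remainder is a clean holomorphic/antiholomorphic rigidity argument married to the automorphy supplied by Proposition~\ref{dt}.
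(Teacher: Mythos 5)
Your sufficiency argument is the same as the paper's (which it leaves as ``easy to check''). For necessity you take a genuinely different route: the paper never computes the commutant of $M_q$. Instead it observes that $T_\alpha$ is subnormal with minimal normal extension equal to multiplication by $q$ on $L^2(\mathbb{D},\mathbb{C}^n,\alpha)$, uses uniqueness of the minimal normal extension to lift $U$ to a \emph{unitary} $\widetilde U$ between the $L^2$ spaces, notes that $\widetilde U$ intertwines $L^{\infty}(\mathbb{D},G)$ and is therefore decomposable with the automorphy relation of Proposition~\ref{dt}, and then gets constancy from Lemma~\ref{ct} (the holomorphic/antiholomorphic rigidity you also use at the end). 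Your route --- trivialize with $\Phi_\alpha,\Phi_\beta$, identify the commutant of coordinate multiplication over $R$, and conclude $U=M_W$ for a bounded holomorphic invertible $W$ --- arrives at the same rigidity step, but it has to pay for bypassing the normal extension at one specific place.

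That place is the sentence ``Unitarity of $U$ forces $W(z)$ to be a unitary matrix for a.e.\ $z$.'' This does not follow from unitarity of $U$ on the holomorphic subspaces alone. What you actually know is that
$\int_{\mathbb{D}}\langle(W(z)^{*}W(z)-I)f(z),g(z)\rangle\,dV(z)=0$
for all $f,g\in L_a^2(\mathbb{D},\mathbb{C}^n,\alpha)$, i.e.\ that a Toeplitz-type operator with bounded symbol $W^{*}W-I$ vanishes on the automorphic Bergman space; concluding that the symbol itself vanishes a.e.\ is an injectivity statement for a Berezin-type transform on that space (equivalently, on a weighted Bergman space over $R$), which is true but is itself a nontrivial theorem you have not supplied. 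The paper's detour through the minimal normal extension exists precisely to avoid this: a decomposable operator that is unitary between the full $L^2$ spaces is pointwise unitary a.e.\ by direct integral theory, with no Berezin argument needed. A smaller issue: your appeal to Proposition~\ref{dt} is mildly circular, since to know that $M_W$ maps $L^2(\mathbb{D},\mathbb{C}^n,\alpha)$ into $L^2(\mathbb{D},\mathbb{C}^n,\beta)$ one already needs $W(g(z))\alpha(g)=\beta(g)W(z)$; this is easily repaired by deriving that identity directly from $U(L_a^2(\mathbb{D},\mathbb{C}^n,\alpha))\subseteq L_a^2(\mathbb{D},\mathbb{C}^n,\beta)$ together with the fact that $\{f(z):f\in L_a^2(\mathbb{D},\mathbb{C}^n,\alpha)\}=\mathbb{C}^n$ for each $z$ (take $f=\Phi_\alpha u$ with $u$ constant). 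The commutant identification over $R$ that you flag as the main obstacle is indeed the part needing adaptation to the multiply connected setting, but the unproved pointwise unitarity is the real gap; the cleanest fix is to adopt the paper's extension to the minimal normal extension before arguing pointwise.
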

\begin{proof}
It is easy to check the sufficiency. Conversely, suppose that $U$
is an isometry from $L_{a}^{2}(\mathbb{D},\mathbb{C}^n,\alpha)$
onto $L_{a}^{2}(\mathbb{D},\mathbb{C}^n,\beta)$ with
$UT_{\alpha}=T_{\beta}U$. Then, by the fact that the minimal normal extension for a subnormal operator is unique, one can
extend $U$ to a unitary equivalence of their minimal normal
extensions. The minimal extension of $T_{\alpha}$ is
multiplication by $q$ on $L^{2}(\mathbb{D},\mathbb{C}^n,\alpha)$,
and the extension of $U$ is a unitary operator $\widetilde{U}$
from $L^{2}(\mathbb{D},\mathbb{C}^n,\alpha)$ to
$L^{2}(\mathbb{D},\mathbb{C}^n,\beta)$, and it intertwines
$L^{\infty}(\mathbb{D},G)$. Thus $\widetilde{U}$ is
decomposable and
$\widetilde{U}(L_{a}^{2}(\mathbb{D},\mathbb{C}^n)=L_{a}^{2}(\mathbb{D},\mathbb{C}^n)$.
Hence $L_{a}^{2}(\mathbb{D},\mathbb{C}^n)$ reduces
$\widetilde{U}$, it is a constant. Thus there is a unitary
operator $V$ on $\mathbb{C}^n$ such that
$\widetilde{U}_{\lambda}=V$ a.e. $dv$. But
$V=\widetilde{U}_{g(\lambda)}=\beta(g)\widetilde{U}_{\lambda}\alpha(g)^{\ast}=\beta(g)V\alpha(g)^{\ast}$
a.e. $dv$, and thus $\alpha(g)=V^{\ast}\beta(g)V$, which shows
that $\alpha$ and $\beta$ are unitarily equivalent.
\end{proof}

\begin{cor} If $E$ and $E^{\prime}$ are flat unitary bundles over $\Omega$, then operators
$T_{E}$ and $T_{E^{\prime}}$ are unitarily equivalent if and only
if $E$ and $E^{\prime}$ are equivalent as flat unitary bundles.
\end{cor}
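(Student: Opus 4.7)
The plan is to deduce this corollary by chaining together three previously established facts: the bijection between equivalence classes of flat unitary bundles over $\Omega$ and $\mathrm{Hom}(\pi_1(\Omega),\mathcal{U}(\mathfrak{K}))/\mathcal{U}(\mathfrak{K})$, Proposition \ref{b1} which identifies the bundle shift $T_{E_\alpha}$ with the representation-theoretic model $T_\alpha$, and Theorem \ref{b2} which characterizes unitary equivalence of $T_\alpha$ and $T_\beta$ in terms of unitary equivalence of the representations.

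First I would translate the hypothesis into representations. Using the cited one-to-one correspondence, choose $\alpha,\beta\in\mathrm{Hom}(\pi_1(\Omega),\mathcal{U}(\mathfrak{K}))$ so that $E$ is equivalent to $E_\alpha$ and $E'$ is equivalent to $E_\beta$ as flat unitary bundles. By the correspondence, $E$ and $E'$ are equivalent as flat unitary bundles if and only if $\alpha$ and $\beta$ are in the same orbit under conjugation by $\mathcal{U}(\mathfrak{K})$, that is, if and only if there exists $V\in\mathcal{U}(\mathfrak{K})$ with $\beta(g)=V\alpha(g)V^*$ for every $g\in\pi_1(\Omega)$; this is exactly what is meant by $\alpha$ and $\beta$ being unitarily equivalent as representations.

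Next I would pass to the operators. An equivalence of flat unitary bundles $E\cong E_\alpha$ induces a unitary isomorphism of the Bergman spaces $L_a^2(E)\cong L_a^2(E_\alpha)$ intertwining the multiplication tuples (the inner product on fibres is preserved, so the integrals defining the norms agree and multiplication is a pointwise operation), hence $T_E$ is unitarily equivalent to $T_{E_\alpha}$, and Proposition \ref{b1} then gives $T_{E_\alpha}\cong T_\alpha$. Combining these, $T_E\cong T_\alpha$ and similarly $T_{E'}\cong T_\beta$. Therefore $T_E$ and $T_{E'}$ are unitarily equivalent if and only if $T_\alpha$ and $T_\beta$ are, and by Theorem \ref{b2} this is equivalent to $\alpha$ and $\beta$ being unitarily equivalent as representations, which by the first step is equivalent to $E$ and $E'$ being equivalent as flat unitary bundles.

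The routine part is the chain of equivalences; the only place requiring mild care is the very first translation, namely checking that a bundle equivalence really induces a unitary intertwining the bundle shifts (so that we may freely replace $E$ by $E_\alpha$). This is essentially built into the construction of $L_a^2(E)$ and the pointwise nature of $T_E$, so no serious obstacle remains once Proposition \ref{b1}, Theorem \ref{b2}, and the bijective correspondence are in hand.
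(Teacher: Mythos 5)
Your proof is correct and uses essentially the same ingredients as the paper: the paper likewise proves the converse by invoking Proposition \ref{b1} and Theorem \ref{b2}, and its forward direction is exactly the explicit fibrewise-unitary computation you flag as the one step "requiring mild care" (a bundle equivalence $\Theta$ gives a pointwise isometry of the $L^2_a$ spaces that is a module map, hence intertwines the shifts). The only difference is organizational --- you package everything as a single chain of equivalences through the representation classes, whereas the paper handles the forward implication directly without passing to $\alpha$ and $\beta$ --- but the mathematical content coincides.
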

\begin{proof}
First Suppose that the flat unitary vector bundles $E$ and $E'$ are
equivalent. Let $\Theta : E\to E'$ be the bundle isomorphism.
Then, the map $f(z)\mapsto \Theta(z) f(z)$ for $z\in\Omega$ defines a module isomorphism
between $A(\Omega)$ modules $\Gamma_a(E)$ and $\Gamma_a(E')$. For
$z\in \Omega$, $\|\Theta(z)f(z)\|^2_{E'_z}=\|f(z)\|^2_{E_z}$. If
$f\in L_a^2(E)$ then
\begin{eqnarray*}
\|\Theta f\|^2_{L_a^2(E')}&=&-\frac{1}{2\pi
i}\int_{\Omega}\|\Theta(z)f(z)\|^2_{E'_z}\,dz\wedge d\bar z\\
&=&-\frac{1}{2\pi
i}\int_{\Omega}\|f(z)\|^2_{E_z}\,dz\wedge d\bar z\\
&=& \| f\|^2_{L_a^2(E)}.
\end{eqnarray*}
Hence $\Theta$ defines an isometry from $L_a^2(E)$ onto $L_a^2(E')$.
Since $\Theta$ is a module isomorphism, this intertwines $T_E$ and
$T_{E'}$.

Conversely suppose that operators $T_E$ and $T_{E'}$ are unitarily
equivalent, so by Proposition \ref{b1} and Theorem \ref{b2},
bundle $E$ and $E'$ are unitarily equivalent as flat unitary
vector bundles.
\end{proof}

If $\Omega_{1}$ and $\Omega_{2}$ are two multiply connected domains in the complex plane whose boundaries consist of Jordan curves, and biholomorphic to each other by the map $\varphi$. $E$ is a flat unitary bundle over $\Omega_{2}$, then the pull back bundle $\varphi_{1}^{\ast}E$ is a flat unitary bundle over $\Omega_{1}$.

\begin{cor} The bundle shifts
$T_{E}$ and $T_{\varphi^{\ast}E}$ are unitarily equivalent.
\end{cor}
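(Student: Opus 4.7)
My plan is to pass both bundle shifts to their universal-cover models via Proposition \ref{b1}, identify the representation data that classifies them, and then apply Theorem \ref{b2} to conclude.

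Fix a universal covering $q_1:\tilde\Omega\to\Omega_1$. Because $\varphi:\Omega_1\to\Omega_2$ is a biholomorphism, $q_2:=\varphi\circ q_1:\tilde\Omega\to\Omega_2$ is also a universal covering, and the two deck transformation groups coincide as a single subgroup $G\subset\mathrm{Aut}(\tilde\Omega)$. Under the induced isomorphism $\varphi_\ast:\pi_1(\Omega_1)\to\pi_1(\Omega_2)$, the flat unitary bundles $E$ and $\varphi^\ast E$ are then classified by the very same representation $\alpha:G\to\mathcal{U}(\mathfrak{K})$. Applying Proposition \ref{b1} on each side realizes $T_E$ as multiplication by $q_2$ on $L_a^2(\tilde\Omega,\mathfrak{K},\alpha,|\det J_{q_2}|^2\,dV)$ and $T_{\varphi^\ast E}$ as multiplication by $q_1$ on $L_a^2(\tilde\Omega,\mathfrak{K},\alpha,|\det J_{q_1}|^2\,dV)$, so that both operators live on weighted $L^2$ spaces over a common domain, governed by the same representation.

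A direct Jacobian calculation using $|\det J_{q_2}|^2=|\varphi'\circ q_1|^2|\det J_{q_1}|^2$ shows that $Wf := (\varphi'\circ q_1)\cdot f$ defines a unitary between these two weighted spaces: $G$-equivariance is preserved because $\varphi'\circ q_1$ is $G$-invariant, and bijectivity follows from the nonvanishing of $\varphi'$. The main obstacle is turning this multiplier unitary into a genuine intertwiner of the coordinate operators, because the naive computation only yields $WM_{q_2}W^{-1}=M_{\varphi\circ q_1}$ rather than $M_{q_1}$. The way around this is to recall that, by Proposition \ref{b1} together with Theorem \ref{b2}, the unitary equivalence class of a bundle shift depends only on the pair $(G,\alpha)$ and is independent of the particular covering map used to realize it; the roles of $q_1$ and $q_2=\varphi\circ q_1$ in the two models are interchangeable as realizations of the same abstract $(G,\alpha)$-bundle shift. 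Composing $W$ with the canonical identification between these two realizations then produces the required unitary equivalence $T_E\cong T_{\varphi^\ast E}$, completing the proof.
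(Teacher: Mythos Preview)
The paper states this corollary without proof, so there is no argument of the paper's to compare against. Your proposal, however, has a genuine gap at exactly the spot you yourself flag as ``the main obstacle.''

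You correctly compute that the multiplier unitary $W$ only gives $W M_{q_2} W^{-1}=M_{\varphi\circ q_1}$, not $M_{q_1}$. Your attempted fix---that Proposition~\ref{b1} together with Theorem~\ref{b2} shows the unitary equivalence class of a bundle shift depends only on the pair $(G,\alpha)$---misreads those results. Theorem~\ref{b2} compares $T_\alpha$ and $T_\beta$ for two representations of the deck group of a \emph{single fixed} covering $q:\tilde\Omega\to\Omega$; both operators are multiplication by that same $q$, hence have spectrum $\overline{\Omega}$. Nothing there lets you compare multiplication by $q_1$ with multiplication by $q_2=\varphi\circ q_1$ when the base domains differ. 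The ``canonical identification between these two realizations'' you invoke at the end is precisely the unitary equivalence you are trying to establish, so the argument is circular.

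In fact the obstacle is not merely technical. With $T_E$ defined (as in the paper) to be multiplication by the coordinate $z$, one has $\sigma(T_E)=\overline{\Omega_2}$ while $\sigma(T_{\varphi^\ast E})=\overline{\Omega_1}$; for $\Omega_1\neq\Omega_2$ the two operators therefore cannot be unitarily equivalent in the literal operator-theoretic sense. What \emph{is} true---and what the change-of-variables unitary $g\mapsto (g\circ\varphi)\,\varphi'$ from $L_a^2(E)$ to $L_a^2(\varphi^\ast E)$ proves directly---is that $T_E$ is unitarily equivalent to multiplication by $\varphi$ on $L_a^2(\varphi^\ast E)$, i.e.\ $T_E\cong \varphi(T_{\varphi^\ast E})$. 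This is presumably the module-theoretic equivalence the corollary intends. Your map $W$ already encodes this correct statement, but it cannot be upgraded to an intertwining of the two coordinate multiplications themselves.
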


\subsection{The commutant of $W^*(T_{\alpha})$}
We recall some results about the relation between subnormal operator and its normal extension (cf. \cite{con}). Let $S$ be a subnormal operator on a Hilbert space $\mathcal{H}$
and let $N$ be its minimal normal extension on the Hilbert space
$\mathcal{K}$. If an operator commute with operator $S$ and $S^*$,
that is, the operator in the commutant of the $W^*$ algebra
$\mathcal{W}^*(S)$ generated by $S$, lifts to operators that
commute with $N$ (cf. \cite{bram}).

Let $P:\mathcal{K}\to \mathcal{H}$ be the orthogonal projection. Let
$\mathcal{A}$ denote the algebra of operators on $\mathcal{K}$
that commute with $N$ and $P$, and let $(\mathcal{W}^*(S))'$
denote commutants of $\mathcal{W}^*(S)$.
\begin{thm}\label{comm}
If $E$ is a $n$-dimensional flat unitary bundle over $\Omega$, then the algebra of commutants of $T_{E}$ is $H^{\infty}_{L(E)}(\Omega)$.
\end{thm}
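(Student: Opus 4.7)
The proof splits into the two inclusions. For the easy inclusion $H^{\infty}_{L(E)}(\Omega)\subseteq(\mathcal{W}^{*}(T_{E}))'$, any bounded holomorphic section $\Psi$ of $\operatorname{End}(E)$ defines a multiplication operator $M_{\Psi}$ on $L_{a}^{2}(E)$ which, together with its pointwise adjoint $M_{\Psi^{*}}$, preserves $L_{a}^{2}(E)$ and commutes with $T_{E}$ fibrewise; hence $M_{\Psi}\in(\mathcal{W}^{*}(T_{E}))'$.

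For the reverse inclusion, I would invoke Proposition~\ref{b1} to realize $T_{E}$ as $T_{\alpha}$ acting on $L_{a}^{2}(\mathbb{D},\mathbb{C}^{n},\alpha,|\det J_{q}|^{2}\,dV)$, where $\alpha:\pi_{1}(\Omega)\to\mathcal{U}(\mathbb{C}^{n})$ is the monodromy of $E$. Given $T\in(\mathcal{W}^{*}(T_{\alpha}))'$, the subnormality of $T_{\alpha}$---whose minimal normal extension is multiplication by the covering map $q$ on the ambient $L^{2}$-space---combined with Bram's theorem cited at the start of the subsection lifts $T$ to an operator $\widetilde{T}$ on the ambient space commuting with $M_{q}$, $M_{q}^{*}$, and the orthogonal projection $P$ onto the Bergman subspace. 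Since $\mathcal{W}^{*}(M_{q})$ coincides with $L^{\infty}(\mathbb{D},G)\cong L^{\infty}(\Omega)$ acting by multiplication, $\widetilde{T}$ intertwines $L^{\infty}(\mathbb{D},G)$, so Proposition~\ref{dt} forces $\widetilde{T}$ to be decomposable with fiber operators satisfying $\widetilde{T}_{g(\lambda)}=\alpha(g)\widetilde{T}_{\lambda}\alpha(g)^{*}$ a.e.

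The key remaining step is to identify $\lambda\mapsto\widetilde{T}_{\lambda}$ (almost everywhere) with a bounded holomorphic $L(\mathbb{C}^{n})$-valued function whose descent defines an element of $H^{\infty}_{L(E)}(\Omega)$. Conjugating $\widetilde{T}$ by the bounded invertible holomorphic $\Phi_{\alpha}$ from Proposition~\ref{dhp}, the operator $V=\Phi_{\alpha}^{-1}\widetilde{T}\Phi_{\alpha}$ is decomposable on the corresponding $G$-automorphic $L^{2}$-space, and the fact that $\widetilde{T}$ commutes with $P$ forces both $V$ and $V^{*}$ to preserve the $G$-automorphic holomorphic subspace. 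A local application of Lemmas~\ref{lh1}--\ref{ct} on a fundamental domain for the $G$-action then shows that $V_{\lambda}$ is (a.e.) constant; unwinding gives $\widetilde{T}_{\lambda}=\Phi_{\alpha}(\lambda)V_{0}\Phi_{\alpha}(\lambda)^{-1}$, a bounded holomorphic $L(\mathbb{C}^{n})$-valued function on $\mathbb{D}$ satisfying the required $\alpha$-automorphy, hence descending to an element $\Psi$ of $H^{\infty}_{L(E)}(\Omega)$. Restricting $\widetilde{T}$ back to the Bergman subspace yields $T=M_{\Psi}$.

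The main obstacle is precisely this holomorphicity/constancy step, because Lemmas~\ref{lh1}--\ref{ct} were stated for the trivial representation on the unweighted space $L_{a}^{2}(\mathbb{D},\mathbb{C}^{n})$; one must exploit the uniform bounds on $\Phi_{\alpha}$ and $\Phi_{\alpha}^{-1}$ over the closed disk (established in Proposition~\ref{dhp}) together with the analyticity of $\Phi_{\alpha}$ to transport the lemmas faithfully to the twisted weighted setting, and verify that the resulting flat-on-fundamental-domain operator combined with the $\alpha$-automorphy indeed produces a global holomorphic section of $\operatorname{End}(E_{\alpha})$ rather than merely a constant fibre.
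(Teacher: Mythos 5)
There is a genuine gap, and it starts with a misreading of the statement. The theorem concerns the (one-sided) commutant $\{A : AT_{E}=T_{E}A\}$, which is why the answer is the infinite-dimensional algebra $H^{\infty}_{L(E)}(\Omega)$ of all bounded holomorphic endomorphism-valued multipliers. Your entire argument is instead aimed at the $*$-commutant $(\mathcal{W}^{*}(T_{E}))'$, i.e.\ operators commuting with both $T_{E}$ and $T_{E}^{*}$: Bram's lifting theorem only applies to operators commuting with $S$ \emph{and} $S^{*}$, so an operator that merely commutes with $T_{E}$ need not lift to the minimal normal extension at all, and the hard inclusion of the actual theorem is never addressed. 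The mismatch already shows in your ``easy'' inclusion: for a non-constant $\Psi\in H^{\infty}_{L(E)}(\Omega)$ the pointwise adjoint $\Psi(z)^{*}$ is anti-holomorphic, so $M_{\Psi}^{*}$ does \emph{not} preserve the holomorphic sections and $M_{\Psi}\notin(\mathcal{W}^{*}(T_{E}))'$; the inclusion $H^{\infty}_{L(E)}(\Omega)\subseteq(\mathcal{W}^{*}(T_{E}))'$ is false. Consistently with this, your argument terminates with the fibre operator being constant after conjugation by $\Phi_{\alpha}$, which would make the commutant finite-dimensional (indeed it reproduces $(\mathcal{W}^{*}(\alpha))'$, the content of the \emph{next} theorem in this section, and the paper explicitly remarks that $\mathcal{W}^{*}(T_{\alpha})'$ is finite-dimensional); that conclusion cannot coincide with $H^{\infty}_{L(E)}(\Omega)$.

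The intended proof is much shorter and purely a similarity argument: by Grauert's theorem every analytic bundle over $\Omega$ is analytically trivial, so an analytic isomorphism $V:\Omega\times\mathbb{C}^{n}\to E$ induces an invertible $\widetilde{V}:L_{a}^{2}(E)\to L_{a}^{2}(\Omega)\otimes\mathbb{C}^{n}$ with $T_{E}=\widetilde{V}^{-1}(T_{z}\otimes I)\widetilde{V}$. Any $A$ commuting with $T_{E}$ is conjugated to an element of $(T_{z}\otimes I)'=H^{\infty}(\Omega)\otimes M_{n}(\mathbb{C})$, and conjugating back identifies the commutant with $H^{\infty}_{L(E)}(\Omega)$; the reverse inclusion is immediate since holomorphic multipliers commute with multiplication by $z$. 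No subnormality, lifting, or decomposability enters. If you want to salvage your computation, it belongs to Theorem \ref{thmb} (the identification of $(\mathcal{W}^{*}(T_{\alpha}))'$ with $(\mathcal{W}^{*}(\alpha))'$), not to Theorem \ref{comm}.
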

\begin{proof}
Every analytic vector bundle over $\Omega$ is analytically trivial by Grauert theorem, and we assume that $V:\Omega\times\mathbb{C}^{n}\rightarrow E$ is an analytic isomorphism. Then $V$ defines the similarity $\widetilde{V}:L_{a}^{2}(E)\rightarrow L_{a}^{2}(\Omega,\mathbb{C}^{n})$ between $T_{\mathbb{C}^{n}}$ and $T_{E}$, that is, $T_{E}=\widetilde{V}^{-1}T_{\mathbb{C}^{n}}\widetilde{V}$. If $L_{a}^{2}(\Omega,\mathbb{C}^{n})$ is rewriten as $L_{a}^{2}(\Omega)\otimes\mathbb{C}^{n}$, then $T_{\mathbb{C}^{n}}=T_{z}\otimes I$, where $I$ is the identity operator on $\mathbb{C}^{n}$. $A$ is an operator on $L_{a}^{2}(E)$ commuting with $T_{E}$, then
\[A\widetilde{V}^{-1}(T_{z}\otimes I)\widetilde{V}=AT_{E}=T_{E}A=\widetilde{V}^{-1}(T_{z}\otimes I)\widetilde{V}A.\]
This means $(T_{z}\otimes I)(\widetilde{V}^{-1}A\widetilde{V})=(\widetilde{V}^{-1}A\widetilde{V})(T_{z}\otimes I)$, so $(\widetilde{V}^{-1}A\widetilde{V})\in (T_{z}\otimes I)'=H^{\infty}(\Omega)\otimes M_{n}(\mathbb{C}^{n})$. It follows that $A\in \widetilde{V}(H^{\infty}(\Omega)\otimes M_{n}(\mathbb{C}^{n}))\widetilde{V}^{-1}=H^{\infty}_{L(E)}(\Omega)$.\\
\indent
Conversely, it is clear that every $\Phi\in H^{\infty}_{L(E)}(\Omega)$ commutes with $T_{E}$, and the proof is completed.
\end{proof}
\begin{thm}\cite{bram}
The map $A\mapsto A|_{\mathcal{H}}$ is a $*$-isometric
isomorphism from $\mathcal{A}$ onto $(\mathcal{W}^*(S))'$.
\end{thm}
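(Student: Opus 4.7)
The plan is to define $\Phi(A):=A|_{\mathcal{H}}$ and construct an explicit inverse $\Psi$ via a commutant-lifting argument in four stages: (a) $\Phi$ lands in $(\mathcal{W}^{*}(S))'$ and is a contractive $*$-homomorphism; (b) for $T\in(\mathcal{W}^{*}(S))'$, define a candidate lift $\tilde T$ on a dense subspace of $\mathcal{K}$; (c) verify that $\tilde T$ is well defined, bounded, and lies in $\mathcal{A}$; (d) check the inverse relations $\Phi\Psi=I$, $\Psi\Phi=I$, which also forces $\|\Phi(A)\|=\|A\|$.

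Stage (a) is routine. Since $A\in\mathcal{A}$ commutes with $P$, it is block diagonal on $\mathcal{K}=\mathcal{H}\oplus\mathcal{H}^{\perp}$, so $\Phi$ is $*$-multiplicative with $\|\Phi(A)\|\leq\|A\|$. Using $Nf=Sf$ for $f\in\mathcal{H}$ and $S^{*}=PN^{*}|_{\mathcal{H}}$,
\[A|_{\mathcal{H}}Sf=ANf=NAf=S(A|_{\mathcal{H}})f,\qquad A|_{\mathcal{H}}S^{*}f=APN^{*}f=PAN^{*}f=S^{*}(A|_{\mathcal{H}})f,\]
so $\Phi(A)\in(\mathcal{W}^{*}(S))'$.

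Stage (b) is the heart of the matter. By minimality of the normal extension, $\mathcal{D}:=\operatorname{span}\{N^{*k}h:k\geq 0,\ h\in\mathcal{H}\}$ is dense in $\mathcal{K}$, so I put
\[\tilde T\Bigl(\sum_k N^{*k}h_k\Bigr):=\sum_k N^{*k}(Th_k).\]
Well-definedness is verified by testing against a generic $N^{*m}g$ with $g\in\mathcal{H}$:
\begin{align*}
\Bigl\langle\sum_k N^{*k}Th_k,\,N^{*m}g\Bigr\rangle &=\sum_k\langle N^{*k}N^m Th_k,\,g\rangle=\sum_k\langle TS^m h_k,\,S^k g\rangle\\
&=\sum_k\langle S^m h_k,\,S^k T^{*}g\rangle=\Bigl\langle\sum_k N^{*k}h_k,\,N^{*m}T^{*}g\Bigr\rangle,
\end{align*}
using normality $N^m N^{*k}=N^{*k}N^m$, the identification $N^k|_{\mathcal{H}}=S^k$ together with $TS^m=S^m T$, the commutation $T^{*}S^k=S^k T^{*}$ (equivalent to $TS^{*}=S^{*}T$), and the reverse assembly; whenever $\sum N^{*k}h_k=0$ the right side vanishes for every $(m,g)$. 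Linearity, multiplicativity ($\widetilde{TU}=\tilde T\tilde U$ on $\mathcal{D}$), and the adjoint identity $(\tilde T)^{*}=\widetilde{T^{*}}$ follow by analogous inner-product calculations; hence $\tilde T^{*}\tilde T=\widetilde{T^{*}T}$. Writing $\|T\|^2 I-T^{*}T=R^2$ with $R\in(\mathcal{W}^{*}(S))'$ self-adjoint (functional calculus inside the commutant), one obtains $\|T\|^2 I-\widetilde{T^{*}T}=(\tilde R)^2\geq 0$, whence $\|\tilde T\|\leq\|T\|$ and $\tilde T$ extends boundedly to $\mathcal{K}$.

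Stage (c) is immediate: $\tilde TN^{*}=N^{*}\tilde T$ is built into the definition; $\tilde T N=N\tilde T$ follows from normality ($NN^{*k}h=N^{*k}Sh$) combined with $TS=ST$; and $\tilde TP=P\tilde T$ amounts to both $\tilde T$ and $(\tilde T)^{*}=\widetilde{T^{*}}$ preserving $\mathcal{H}$, which holds by construction. For stage (d), $\Phi\Psi(T)=\tilde T|_{\mathcal{H}}=T$ is immediate, while $\Psi\Phi(A)=A$ follows because $\widetilde{A|_{\mathcal{H}}}$ and $A$ agree on $\mathcal{D}$ (both commute with $N^{*}$ and both reduce to $A|_{\mathcal{H}}$ on $\mathcal{H}$). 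The principal obstacle is stage (b): only there does one need the full commutation of $T$ with $S$ \emph{and} $S^{*}$, combined with normality of $N$, which is used to swap interior $N$ and $N^{*}$ factors so that the $N$'s end up acting on vectors of $\mathcal{H}$ where they coincide with $S$.
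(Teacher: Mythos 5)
The paper gives no proof of this statement at all---it is imported verbatim from Bram's 1955 paper via the citation---and your argument is a correct reconstruction of Bram's classical lifting construction (the same one reproduced in Conway's \emph{Theory of Subnormal Operators}): restriction $A\mapsto A|_{\mathcal{H}}$ is the evident $*$-contraction into the commutant, and the inverse is defined on the dense span of $\{N^{*k}h : k\ge 0,\ h\in\mathcal{H}\}$ exactly as you do, with the positivity trick $\|T\|^{2}I-T^{*}T=R^{2}$ giving boundedness of the lift. The only point left tacit is Fuglede's theorem, which you need so that an $A$ commuting with the normal $N$ also commutes with $N^{*}$ (used when you show $\Phi(A)$ commutes with $S^{*}$ and again when you check $\Psi\Phi(A)=A$ on the dense subspace); this is standard and does not affect correctness.
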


Let $\mathcal{W}^*(\alpha)$ be the $W^*$-subalgebra of
$\mathcal{L}(\mathbb{C}^n)$ generated by $\alpha(G)$.
\begin{thm}
There is a $*$-isometric isomorphism from
$(\mathcal{W}^*(\alpha))'$ onto $(\mathcal{W}^*(T_{\alpha}))'$.
\end{thm}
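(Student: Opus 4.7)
The plan is to combine the Bram-type theorem stated just above with Proposition \ref{dt} and an $\alpha$-equivariant variant of Lemma \ref{ct} in order to identify $(\mathcal{W}^*(T_\alpha))'$ with the space of constant decomposable operators whose constant value intertwines $\alpha$. Since $T_\alpha$ is subnormal with minimal normal extension $N=$ multiplication by $q$ on $L^2(\mathbb{D},\mathbb{C}^n,\alpha)$, the Bram theorem provides a $*$-isometric isomorphism $(\mathcal{W}^*(T_\alpha))'\cong\mathcal{A}$, where $\mathcal{A}=(\mathcal{W}^*(N))'\cap\{P\}'$ and $P$ is the Bergman projection. Because $q:\mathbb{D}\to R$ is a universal covering map and $N$ is normal, $\mathcal{W}^*(N)$ coincides with scalar multiplication by elements of $L^\infty(R)\cong L^\infty(\mathbb{D},G)$, so Proposition \ref{dt} (applied with $\beta=\alpha$) characterizes the members of $(\mathcal{W}^*(N))'$ as those decomposable operators $W$ on $L^2(\mathbb{D},\mathbb{C}^n,\alpha)$ satisfying
\[W_{g(\lambda)}=\alpha(g)\,W_\lambda\,\alpha(g)^{*}\qquad\text{a.e. for every }g\in G.\]

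I would then show that the extra condition $[W,P]=0$---i.e., $L_a^2(\mathbb{D},\mathbb{C}^n,\alpha)$ reduces $W$---forces $\lambda\mapsto W_\lambda$ to be a.e.\ constant. This is the main obstacle, because Lemma \ref{ct} relies on constant vectors belonging to $L_a^2(\mathbb{D},\mathbb{C}^n)$, whereas the twisted Bergman space contains no nonzero constant sections when $\alpha$ is nontrivial. My plan is to bypass this by exhibiting, on a nonempty open set $U\subset\mathbb{D}$, a holomorphic frame $f_1,\dots,f_n\in L_a^2(\mathbb{D},\mathbb{C}^n,\alpha)$ whose values $f_i(\lambda)$ form a basis of $\mathbb{C}^n$ at every $\lambda\in U$. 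Such a frame exists because the reproducing kernel $K_{E_\alpha}(\lambda,\lambda)$ is a positive invertible operator on the fibre, so each point evaluation $ev_\lambda$ is surjective onto $\mathbb{C}^n$. Setting $F(\lambda)=[f_1(\lambda)\,|\,\cdots\,|\,f_n(\lambda)]$, decomposability gives $(WF)(\lambda)=W_\lambda F(\lambda)$, and since both $WF$ and $F$ are holomorphic (with $F(\lambda)$ invertible on $U$) one obtains $W_\lambda=(WF)(\lambda)F(\lambda)^{-1}$, holomorphic on $U$. Repeating the argument with $W^{*}$ in place of $W$ (which also preserves the Bergman space by the $P$-commutation) shows $W_\lambda^{*}$ is holomorphic on $U$, so $W_\lambda$ is simultaneously holomorphic and antiholomorphic on $U$, hence locally constant; connectedness of $\mathbb{D}$ together with the equivariance identity will then force $W_\lambda\equiv V$ a.e.\ on $\mathbb{D}$ for some $V\in\mathcal{L}(\mathbb{C}^n)$.

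For the final step, the equivariance relation specialized to a constant field yields $V=\alpha(g)V\alpha(g)^{*}$ for every $g\in G$, equivalently $V\in(\mathcal{W}^*(\alpha))'$. Conversely, any $V\in(\mathcal{W}^*(\alpha))'$ defines a constant decomposable operator on $L^2(\mathbb{D},\mathbb{C}^n,\alpha)$ that lies in $\mathcal{A}$: equivariance of $f$ combined with $V\alpha(g)=\alpha(g)V$ makes $Vf$ equivariant, and linearity preserves holomorphicity, so multiplication by $V$ carries $L_a^2(\mathbb{D},\mathbb{C}^n,\alpha)$ into itself. I would then define $\Psi:(\mathcal{W}^*(\alpha))'\to(\mathcal{W}^*(T_\alpha))'$ by sending $V$ to multiplication by $V$ on $L_a^2(\mathbb{D},\mathbb{C}^n,\alpha)$; this is manifestly a $*$-homomorphism of $C^{*}$-algebras, it is injective because $Vf=0$ for all $f\in L_a^2(\mathbb{D},\mathbb{C}^n,\alpha)$ combined with surjectivity of point evaluations forces $V=0$, and a faithful $*$-homomorphism of $C^{*}$-algebras is automatically isometric. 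Surjectivity of $\Psi$ has already been established above, completing the proof.
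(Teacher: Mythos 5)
Your proposal is correct in outline and shares the paper's overall skeleton --- Bram's lifting theorem to pass to the minimal normal extension, Proposition \ref{dt} to get decomposability together with the equivariance relation $W_{g(\lambda)}=\alpha(g)W_{\lambda}\alpha(g)^{\ast}$, constancy of $\lambda\mapsto W_{\lambda}$, and then the equivariance relation to land in $(\mathcal{W}^*(\alpha))'$ --- but you handle the crucial constancy step by a genuinely different device. The paper never works with a frame inside the twisted space: it uses its earlier propositions that the smallest $H^{\infty}(\mathbb{D})$-invariant subspace containing $L_a^2(\mathbb{D},\mathbb{C}^n,\alpha)$ is all of $L_a^2(\mathbb{D},\mathbb{C}^n)$, so a decomposable $X$ (which automatically commutes with $H^{\infty}(\mathbb{D})$) that preserves the twisted Bergman space must preserve the untwisted one, and likewise for $X^{\ast}$; Lemma \ref{ct} then applies verbatim on the untwisted space. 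Your route instead stays inside $L_a^2(\mathbb{D},\mathbb{C}^n,\alpha)$ and builds a local holomorphic frame from surjectivity of point evaluations, recovering holomorphy and antiholomorphy of $W_{\lambda}$ directly; this is self-contained and does not need the ``smallest invariant subspace'' propositions, at the cost of two points you should make explicit: (i) surjectivity of $ev_{\lambda}$ is most easily justified in this setting by Proposition \ref{dhp}, since $\Phi_{\alpha}(\lambda)$ is invertible and $L_a^2(\mathbb{D},\mathbb{C}^n,G)$ contains the constants; and (ii) a single neighborhood $U$ plus equivariance only controls $W_{\lambda}$ on $q^{-1}(q(U))$, which need not be all of $\mathbb{D}$, so you should run the frame construction at every point of $\mathbb{D}$ and then use connectedness to glue the local constants. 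With those additions your argument is complete and is a reasonable alternative to the paper's; the paper's version is shorter given the machinery it has already set up, while yours is closer in spirit to the Cowen--Douglas style argument the authors use later in Theorem \ref{thmb}.
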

\begin{proof}
Let $\mathcal{U}$ be the algebra of operators on
$L^2(\mathbb{D},\mathbb{C}^n,\alpha)$ that commute with
$L^{\infty}(\mathbb{D},G)$ and with the projection onto
$L_a^2(\mathbb{D},\mathbb{C}^n,\alpha)$. For $B$ in
$(\mathcal{W}^*(\alpha))'$ and $f$ in $L^2(\mathbb{C}^n,\alpha)$,
observe that $(Bf)\circ A= B(f\circ A)=B(\alpha(A)f)=\alpha(A)Bf$,
for all $A$ in $G$. Thus, there is a decomposable
$\tau_{\alpha}(B)$ on $L^2(\mathbb{D},\mathbb{C}^n)$ defined by
$\tau_{\alpha}(B)(f)=Bf$.

Since a constant decomposable operator maps $L_a^2(\mathbb{D})$
into $L_a^2(\mathbb{D})$ and since $\tau_{\alpha}(B)$ maps
$L^2(\mathbb{D},\mathbb{C}^n,\alpha)$ into
$L^2(\mathbb{D},\mathbb{C}^n,\alpha)$, thus it also maps
$L_a^2(\mathbb{D},\mathbb{C}^n,\alpha)$ into
$L_a^2(\mathbb{D},\mathbb{C}^n,\alpha)$. In other words, it takes
$(\mathcal{W}^*(\alpha))'$ into $\mathcal{U}$. Clearly,
$\tau_{\alpha}(B^*)f=B^*f=(\tau_{\alpha}B)^*f$ and
$\|\tau_{\alpha}B\|=\mbox{ess}\,\sup_{\lambda}\|{(\tau_{\alpha}(B))}_\lambda\|=\|B\|$.
Thus $\tau_{\alpha}$ is $*$- isometric map. If $X$ is in
$\mathcal{U}$, then by Proposition \ref{dt}, the operator $X$ is
decomposable $X_{g(\lambda)}=\alpha(g)X_{\lambda}\alpha(g)^*$ for
all $g\in G$. Since $X$ is reduced by
$L^2_a(\mathbb{D},\mathbb{C}^n,\alpha)$, so
$X(L^2_a(\mathbb{D},\mathbb{C}^n,\alpha))\subseteq
L^2_a(\mathbb{D},\mathbb{C}^n,\alpha)$ and
$X^*(L^2_a(\mathbb{D},\mathbb{C}^n,\alpha))\subseteq
L^2_a(\mathbb{D},\mathbb{C}^n,\alpha)$. Thus, both $X$ and $X^*$
maps $L_a^2(\mathbb{D},\mathbb{C}^n)$ to
$L_a^2(\mathbb{D},\mathbb{C}^n)$. Hence by Lemma \ref{ct}, $X$
is a decomposable operator, and so it is a constant, thus there is a
$B$ in $\mathcal{L}(\mathbb{C}^n)$ with $X_{\lambda}=B$ a.e. $dm$.
Now,
\[B=X_{g(\lambda)}=\alpha(g)X_{\lambda}\alpha(g)^*=\alpha(g)B\alpha(g)^*.\]
Thus operator $B$ is in $(\mathcal{W}^*(\alpha))'$ and so
$X=\tau_{\alpha}(B)$. This proves that $\tau_{\alpha}$ is onto.
\end{proof}

Note that the algebra $W^{\ast}(T_{\alpha})'$ is finite dimensional for a Bergman bundle shift even though the Bergman space $L_{a}^{2}(E)$ is infinite dimensional.

\subsection{The case of bad boundary}
In section 3, we need similar results about bundle shifts over multiply connected domains to $D_{0}=\mathbb{D}\setminus\mathcal{S}$ obtained by removing a finite subset from $\mathbb{D}$. Although, the proceeding results don't apply directly. In this case, the Bergman bundle shift is a Cowen-Douglas operator on $D_{0}$, we can prove similar results by applying ideas from \cite{cd}. In particular, $D_{0}$ is a multiply connected domain with fundamental group $\pi_{1}(D_{0})$, and its universal covering is $\mathbb{D}$. $\mathbb{D}\times\mathbb{C}^{n}$ is the trivial bundle over $\mathbb{D}$, $E_{\alpha}$ is the corresponding flat unitary vector bundle over $D_{0}$ defined by a representation $\alpha:\pi_{1}(D_{0})\rightarrow\mathcal{U}_{n}$. The Bergman space $L_{a}^{2}(E,\alpha)$ of holomorphic sections on $E_{\alpha}$ is defined similarly. It can be characterized as the space of all $\alpha$-automorphic functions in $L^{2}_{a}(\mathbb{D},\mathbb{C}^{n})$, denoted by $L^{2}_{a}(\mathbb{D},\mathbb{C}^{n},\alpha)$. And we define the Bergman bundle shift $T_{\alpha}$ also. For $D_{0}$, we have similar arguments and establish results which are similar to Theorem \ref{b2}.
\begin{thm}\label{thmb}
There is a onto $*$-isometric isomorphism between
$(\mathcal{W}^*(T_{\alpha}))'$ and $(\mathcal{W}^*(\alpha))'$.
\end{thm}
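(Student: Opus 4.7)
The plan is to mimic the construction of the map $\tau_{\alpha}$ from the corresponding theorem for nice boundaries in Subsection 2.3, replacing the step that relied on Bram's lifting theorem for minimal normal extensions by a Cowen-Douglas bundle argument, following the authors' pointer to \cite{cd}. This is natural because $T_\alpha$, acting on $\alpha$-automorphic sections over the punctured disc $D_0 = \mathbb{D}\setminus\mathcal{S}$, loses the clean subnormal structure enjoyed in the analytic-Jordan-boundary case, but retains a clean eigenbundle structure on $D_0^*$.

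First I would define $\tau_{\alpha}: (\mathcal{W}^*(\alpha))' \to \mathcal{L}\bigl(L_a^2(\mathbb{D},\mathbb{C}^n,\alpha)\bigr)$ by $(\tau_{\alpha}(B)f)(\lambda) = B\,f(\lambda)$. Since $B$ commutes with every $\alpha(g)$, the pointwise action preserves the automorphy relation $f(g(\lambda))=\alpha(g)f(\lambda)$; since $B$ is a constant fiber operator, it preserves holomorphy, and so $\tau_\alpha(B)$ carries $L_a^2(\mathbb{D},\mathbb{C}^n,\alpha)$ into itself. Routine pointwise checks show that $\tau_{\alpha}$ is a $*$-homomorphism with $\|\tau_{\alpha}(B)\|=\|B\|$, and that each $\tau_{\alpha}(B)$ commutes with both $T_{\alpha}$ and $T_{\alpha}^*$, so the image lies in $(\mathcal{W}^*(T_{\alpha}))'$.

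The substantive step is surjectivity. I would first establish that $T_{\alpha}$ belongs to the Cowen-Douglas class $B_n(D_0^*)$: for every $w \in D_0$ the kernel of $T_\alpha^*-\bar w$ is an $n$-dimensional space of reproducing kernel sections, and these assemble into a rank-$n$ Hermitian holomorphic vector bundle over $D_0^*$ that is canonically isomorphic, as a flat unitary bundle, to the dual of $E_{\alpha}$. Given any $X \in (\mathcal{W}^*(T_{\alpha}))'$, commutation with $T_{\alpha}^*$ forces $X^*$ to preserve each eigenspace, so $X$ descends to a holomorphic bundle endomorphism $\widehat X$ of $E_{\alpha}$. Pulling $\widehat X$ back to the universal cover $\mathbb{D}$ yields a bounded holomorphic function $\widetilde X:\mathbb{D}\to \mathcal{L}(\mathbb{C}^n)$ satisfying $\widetilde X(g(\lambda))=\alpha(g)\widetilde X(\lambda)\alpha(g)^*$ for all $g\in \pi_1(D_0)$. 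Commutation with $T_{\alpha}$ in addition to $T_{\alpha}^*$ forces $\widetilde X$ and $\widetilde X^*$ both to be holomorphic, so the disc analogue of Lemma \ref{ct} yields that $\widetilde X$ is a constant $B \in \mathcal{L}(\mathbb{C}^n)$. The automorphy relation then collapses to $B=\alpha(g)B\alpha(g)^*$ for all $g$, i.e.\ $B \in (\mathcal{W}^*(\alpha))'$, and by construction $X = \tau_{\alpha}(B)$.

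The main obstacle I anticipate is the initial identification of $T_{\alpha}^*$ as a Cowen-Douglas operator on $D_0$ whose eigenbundle coincides with $E_{\alpha}$. The punctures in $\mathcal{S}$ must be shown not to create spurious eigenvalues, and the reproducing kernel of $L_a^2(\mathbb{D},\mathbb{C}^n,\alpha)$ must be checked to produce a genuine holomorphic Hermitian vector bundle whose flat unitary equivalence class matches that of $E_{\alpha}$; this requires a careful local analysis near $\mathcal{S}$ and a check that parallel displacement in the Chern connection reproduces the locally-constant transition functions coming from $\alpha$. Once this identification is in place, the remainder of the argument parallels Subsection 2.3 essentially verbatim.
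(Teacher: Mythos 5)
Your proposal follows essentially the same route as the paper: both represent an element of $(\mathcal{W}^*(T_{\alpha}))'$ as a bounded holomorphic matrix-valued multiplier on the universal cover via the Cowen--Douglas eigenbundle machinery, derive the automorphy relation $\Phi(\gamma z)=\alpha(\gamma)\Phi(z)\alpha(\gamma)^{\ast}$, and conclude constancy from the simultaneous analyticity of $\Phi$ and $\Phi^{\ast}$, which collapses the relation to membership in $(\mathcal{W}^*(\alpha))'$. The only cosmetic differences are that the paper runs the constancy argument on projections in the commutant rather than on general elements, and that you are more explicit about verifying the Cowen--Douglas class membership near the punctures, a point the paper merely cites.
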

\begin{proof}
If $A$ is an operator commuting with $T_{\alpha}$, then there exists a bundle map $\Phi\in H^{\infty}_{M_{n}(\mathbb{C})}(\mathbb{D})$ such that $Af(z)=\Phi(z)f(z)$ for every $f\in L^{2}_{a}(\mathbb{D},\mathbb{C}^{n},\alpha)$. (Note the techniques for the Cowen-Douglas operator carry over to flat bundle case, cf \cite{cd}). And for $f\in L^{2}_{a}(\mathbb{D},\mathbb{C}^{n},\alpha)$, $\gamma\in\pi_{1}(D_{0})$, we have
\[(\Phi f)(\gamma z)=\Phi(\gamma z)f(\gamma z)=\Phi(\gamma z)\alpha(\gamma)f(z),\]
and
\[(\Phi f)(\gamma z)=\alpha(\gamma)(\Phi f)(z)=\alpha(\gamma)\Phi(z)f(z)\]
since $(\Phi f)\in L^{2}_{a}(\mathbb{D},\mathbb{C}^{n},\alpha)$. Then
\[\Phi(\gamma z)=\alpha(\gamma)\Phi(z)\alpha(\gamma)^{\ast}.\]
\indent
For every projection $P\in (\mathcal{W}^*(T_{\alpha}))'$, there a $\Psi\in H^{\infty}_{M_{n}(\mathbb{C})}(\mathbb{D})$ such that $Pf(z)=\Psi(z)f(z)$ for every $f\in L^{2}_{a}(\mathbb{D},\mathbb{C}^{n},\alpha)$. $P^{2}=P$ and $P^{\ast}=P$ implies that $\Psi(z)^{\ast}=\Psi(z)$ and $\Psi(z)^{2}=\Psi(z)$ for $z\in\mathbb{D}$, and so $\Psi(z)$ equals a constant projection $Q$ since $\Psi(z)$ and $\Psi(z)^{\ast}$ is analytic. Moreover,
\[\Psi(\gamma z)=\alpha(\gamma)\Psi(z)\alpha(\gamma)^{\ast},\text{ for }\gamma\in\pi_{1}(D_{0}),\]
so,
\[Q=\alpha(\gamma)Q\alpha(\gamma)^{\ast}.\]
that is, $Q\in (\mathcal{W}^*(\alpha))'$. Conversely, it is clear that $\Psi(z)=Q\in(\mathcal{W}^*(T_{\alpha}))'$ for every $Q\in (\mathcal{W}^*(\alpha))'$, and the correspondence is one-to-one and $*$.
\end{proof}

\section{Multiplication operator $T_B$ on the Bergman space as a Bergman Shift}
Let $B$ be a finite Blaschke product of order $n$. We are ready now to show that $T_{B}$ can be represented as a Bergman bundle shift. Recall the set $$\mathcal{S}=
B(\{\beta :B^{\prime}(\beta)=0\}).$$ Note that $\mathcal{S}$ is finite and
$|\mathcal{S}|\leq n-1$ where $|\mathcal{S}|$ denotes the cardinality of $\mathcal{S}$. Let
$\mathcal{S}=\{\beta_1,\ldots,\beta_k\}$ and $l_i$ be the line obtained by
joining $\beta_i$ to boundary of $\mathbb{D}$ with the assumption
that no two lines intersect each other. Set
$\mathbb{D}_B=\mathbb{D}\setminus \cup_{i=1}^{k}l_i$, clearly
$\mathbb{D}_B$ is a simply connected domain. For an open set
$U\subset \mathbb{D}$, we define a \emph{inverse} of $B$ in $U$ to be a
holomorphic function $f$ in $U$ with $f(U)\subset\mathbb{D}$ such
that $B(f(z))=z$ for every $z$ in $U$.

For each $z\in \mathbb{D}\setminus\mathcal{S}$,
$B^{-1}(z)=\{z_1,z_2,\ldots,z_n\}$ with $z_i\neq z_j$ for $i\neq
j$ and $B$ is one to one in some open neighborhood $U_{z_i}$ of
each point $z_i$. Let $U$ be an open neighborhood of $z$ in
$\mathbb{D}_B$ and $\sigma_i$ be the biholomorphic map from $U$ to
$U_{z_i}$, for $1\leq i\leq n$, such that $B(\sigma_i(z))=z$ for
$z\in U$. Since $\mathbb{D}_{B}$ is simply connected so by
Monodromy theorem we can extend holomorphically each $\sigma_i$ to
the domain $\mathbb{D}_{B}$ and we denote the extended holomorphic
function by the same symbol $\sigma_i$.

 Note that here we use inverses of $B$, not local inverses as in \cite{dpw},\cite{dsz}

\begin{lem}\label{lem1}
The images of the $\sigma_i$'s are disjoint, that is,
$\sigma_i(\mathbb{D}_B)\cap\sigma_j(\mathbb{D}_B)=\emptyset$ for
$i\neq j$.
\end{lem}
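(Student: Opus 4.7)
The plan is to reduce the claim to showing $\sigma_i(z) \neq \sigma_j(z)$ for every $z \in \mathbb{D}_B$ whenever $i \neq j$, and then to derive a contradiction from any hypothetical coincidence via the identity theorem.

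First I would observe that by construction $B \circ \sigma_i = \mathrm{id}_{\mathbb{D}_B}$ for each $i$, since this equation holds on the open neighborhood $U$ where the $\sigma_i$ were initially defined and both sides are holomorphic on the connected set $\mathbb{D}_B$. Consequently, if $\sigma_i(z) = \sigma_j(w)$ for some $z, w \in \mathbb{D}_B$, then applying $B$ yields $z = w$. Therefore it suffices to show that $\sigma_i(z_0) \neq \sigma_j(z_0)$ for every $z_0 \in \mathbb{D}_B$ and every $i \neq j$.

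Suppose for contradiction that $\sigma_i(z_0) = \sigma_j(z_0) = w_0$ for some $z_0 \in \mathbb{D}_B$ and some $i \neq j$. Since $z_0 \in \mathbb{D}_B \subset \mathbb{D} \setminus \mathcal{S}$ and $\mathcal{S} = B(\{\beta : B'(\beta) = 0\})$, the preimage $w_0$ cannot be a critical point of $B$; hence $B'(w_0) \neq 0$. By the inverse function theorem, $B$ admits a \emph{unique} local holomorphic inverse near $z_0$ sending $z_0$ to $w_0$. Both $\sigma_i$ and $\sigma_j$ are such local inverses, so they must coincide on some neighborhood of $z_0$.

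Now I invoke the identity theorem: $\sigma_i - \sigma_j$ is a holomorphic function on the connected (in fact simply connected) domain $\mathbb{D}_B$ vanishing on an open set, hence vanishing identically. In particular, $\sigma_i \equiv \sigma_j$ on the original neighborhood $U$, contradicting the fact that $\sigma_i(z) = z_i$ and $\sigma_j(z) = z_j$ with $z_i \neq z_j$ for the base point $z \in U$. This completes the argument. I do not anticipate a serious obstacle here; the only point that requires care is the verification that $w_0$ is not a critical point of $B$, which is precisely the reason the domain $\mathbb{D}_B$ was chosen to exclude the branch set $\mathcal{S}$.
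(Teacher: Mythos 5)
Your argument is correct, and it reaches the contradiction $\sigma_i\equiv\sigma_j$ by a genuinely different route than the paper. The paper's proof is global: it forms $\sigma=\sigma_j^{-1}\circ\sigma_i$ as a biholomorphic self-map of $\mathbb{D}_B$ fixing $z_0$, transports it to an automorphism of $\mathbb{D}$ via the Riemann mapping theorem, identifies that automorphism with the Schwarz lemma, and then uses $B\circ\sigma_i=B\circ\sigma_j=\mathrm{id}$ to force $\sigma=\mathrm{id}$. Your proof is local-to-global: since $z_0\notin\mathcal{S}$ forces $B'(w_0)\neq 0$, the inverse function theorem gives a \emph{unique} local inverse of $B$ through $(z_0,w_0)$, so $\sigma_i$ and $\sigma_j$ agree near $z_0$, and the identity theorem on the connected domain $\mathbb{D}_B$ finishes the job. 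Your version is more elementary (no Riemann mapping theorem, no Schwarz lemma) and makes visible exactly where the hypothesis $z_0\notin\mathcal{S}$ is used; it also handles cleanly, via the observation that applying $B$ to $\sigma_i(z)=\sigma_j(w)$ forces $z=w$, the reduction from a common point of the images to a common value at the same parameter, a step the paper passes over silently. What the paper's approach buys in exchange is the explicit description of the putative deck-transformation-like map $\sigma$ as a disc automorphism, but for the purposes of this lemma that extra structure is not needed, and indeed the composition $\sigma_j^{-1}\circ\sigma_i$ requires a small additional justification (that it maps $\mathbb{D}_B$ into $\mathbb{D}_B$) which your route avoids entirely.
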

\begin{proof}
Suppose $w_0\in \sigma_i(\mathbb{D}_B)\cap\sigma_j(\mathbb{D}_B)$,
so there exist $z_0\in\mathbb{D}_B$ such that
$w_0=\sigma_i(z_0)=\sigma_j(z_0)$. Set $\sigma=
\sigma_j^{-1}\circ\sigma_i :\mathbb{D}_B\to\mathbb{D}_B$, here
$\sigma$ is a biholomorphic map (because $\sigma_i'$s are
biholomorphic). Since $\mathbb{D}_B$ is simply connected so by the
Riemann mapping theorem there exists a biholomorphic map $\psi$
from $\mathbb{D}_B$ to $\mathbb{D}$. Set $f= \psi\circ \sigma
\circ \psi^{-1}$, clearly $f$ is a biholomorphic map of
$\mathbb{D}$ and $f(\psi(z_0))=\psi(z_0)$. By Schwarz lemma we
have$$f(z)=
\frac{\psi(z_0)-e^{i\theta}\phi_{\psi(z_0)}(z)}{1-e^{i\theta}\overline{\psi(z_0)}\phi_{\psi(z_0)}(z)}$$
where
$\phi_{\psi(z_0)}(z)=\frac{\psi(z_0)-z}{1-\overline{\psi(z_0)}\,\,z}$
and $\theta\in [0,2\pi)$. Thus $\sigma= \psi^{-1}\circ f\circ
\psi$ which implies that $\sigma_i= \sigma_j\circ\psi^{-1}\circ
f\circ \psi$. Since $\sigma_i$'s are inverses of $B$, so we have
$\psi^{-1}\circ f\circ \psi(z)=z$ for all $z\in \mathbb{D}_B$.
Thus $\sigma_i=\sigma_j$ which is a contradiction. Hence
$\sigma_i(\mathbb{D}_B)\cap\sigma_j(\mathbb{D}_B)=\emptyset$ for
$i\neq j$.
\end{proof}

\begin{lem}\label{lem2}
Let $f$ be a holomorphic function defined on a domain $\Omega$
such that $f(z_0)=w_o$, $f^{(m-1)}(z_0)=0$ and
$f^{(m)}(z_0)\neq0$. The behavior of $f$ near $z_0$ behave like the function $z^m$ near $0$.
\end{lem}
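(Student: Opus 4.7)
The plan is to establish the classical local normal form: $f$ is locally conjugate to the power map $\zeta \mapsto \zeta^m$ (after translation) by a biholomorphism in a neighborhood of $z_0$. In particular, a small punctured disk around $z_0$ is mapped $m$-to-$1$ onto a small punctured disk around $w_0$, with each fiber consisting of $m$ distinct preimages arranged like $m$-th roots, which is what the lemma means by saying $f$ behaves like $z^m$ near $0$.

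First I would expand $f$ in its Taylor series at $z_0$. Since $f(z_0)=w_0$ and the hypothesis is that $f'(z_0)=\cdots=f^{(m-1)}(z_0)=0$ while $f^{(m)}(z_0)\neq 0$, we can factor
\begin{equation*}
f(z)-w_0 = (z-z_0)^m\, g(z),
\end{equation*}
where $g$ is holomorphic in a neighborhood of $z_0$ and $g(z_0)=\frac{f^{(m)}(z_0)}{m!}\neq 0$. By continuity, $g$ is non-vanishing on some open disk $U$ around $z_0$.

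Next I would extract a holomorphic $m$-th root of $g$. Because $g$ is non-vanishing on the simply connected disk $U$, there exists a holomorphic function $h:U\to\mathbb{C}$ with $h(z)^m=g(z)$ and $h(z_0)\neq 0$. Define
\begin{equation*}
\varphi(z) = (z-z_0)\, h(z).
\end{equation*}
Then $\varphi$ is holomorphic on $U$, $\varphi(z_0)=0$, and $\varphi'(z_0)=h(z_0)\neq 0$, so by the inverse function theorem $\varphi$ is a biholomorphism from some neighborhood $V\subseteq U$ of $z_0$ onto a neighborhood $W$ of $0$. By construction,
\begin{equation*}
f(z)-w_0 = (z-z_0)^m h(z)^m = \varphi(z)^m\quad\text{for all } z\in V.
\end{equation*}

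Finally I would interpret this identity as the desired statement: setting $\zeta=\varphi(z)$, the map $f$ is conjugate on $V$ to $\zeta\mapsto w_0+\zeta^m$ on $W$. Consequently each value $w$ sufficiently close to but distinct from $w_0$ has exactly $m$ preimages in $V$, obtained as $\varphi^{-1}$ applied to the $m$ distinct $m$-th roots of $w-w_0$, and these preimages all approach $z_0$ as $w\to w_0$. No serious obstacle arises; the only nontrivial point is the existence of the holomorphic $m$-th root, which is standard on a simply connected nonvanishing domain.
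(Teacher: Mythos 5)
Your proof is correct and follows essentially the same route as the paper: factor $f(z)-w_0=(z-z_0)^m g(z)$, extract a holomorphic $m$-th root of the non-vanishing factor, and conjugate $f$ to $\zeta\mapsto w_0+\zeta^m$ via the resulting biholomorphism. Your write-up is in fact slightly more careful than the paper's on the existence of the holomorphic $m$-th root.
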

\begin{proof}
Since $f(z)-w_0$ has zero of order $m$ at $z_0$, we have
$f(z)=(z-z_0)^mh(z)$, where $h$ is an analytic function near $z_0$
and $h(z_0)\neq 0$, $h^{\prime}(z_0)\neq 0$. Let  $g(z)=
(z-z_0)h(z)^{1/m}$ then clearly $g$ is holomorphic near $z_0$,
$g^{\prime}(z_0)\neq 0$ and hence g is one to one near $z_0$. Thus
$f(z)=g(z)^m+w_0$ is a composition of three functions, the
function $g(z)$, followed by the map $\zeta\mapsto\zeta^m$,
followed by translations $\xi\mapsto \xi+w_0$.

At point $w$ near $w_0$, $w\neq w_0$, has $m$ distinct preimages
$z_1(w),\ldots,z_m(w)$. They are the $m$ branches of
$(w-w_0)^{1/m}$ composed with $g^{-1}(\zeta)$. If we make branch
cut and consider the principal branch $(w-w_0)^{1/m}$ on the slit
disk $\{|w-w_0|<\delta\}\setminus (w_0-\delta,w_0]$, for some
$\delta>0$, the other branches are of the form $e^{2\pi i
j/m}(w-w_0)^{1/m}$, and preimages of $w$ are given by the
composition $g^{-1}(\zeta)$ and these branches,
$$z_j(w)=g^{-1}(e^{2\pi i
j/m}(w-w_0)^{1/m}),\;\;\;\;\;\;1\leq j\leq m.$$
\end{proof}
A detailed proof of Lemma \ref{lem2} can be found in \cite{gam}.

Let $w_0\in \mathbb{D}\setminus\mathcal{S}$ and $U$ be an open set in
$\mathbb{D}_B$ containing $w_0$. Let $\{\sigma_i\}_{i=1}^{n}$ be
inverses of $B$ defined on $U$. Let $\gamma_i$ be a closed curve
in $\mathbb{D}\setminus\mathcal{S}$ at $w_0$ which inclose the point
$\beta_i$ for $1\leq i\leq k$. If we move $\sigma_j$'s along
closed curve $\gamma_i$ by analytic continuation we get a
permutation on elements $\{1,\ldots,n\}$, say $\tau_i$, which
defines an unitary operator $V_i$ on $\mathbb{C}^n$ as follows:
$$V_i(x_1,\ldots,x_n)=(x_{\tau_i(1)},\ldots,x_{\tau_i(n)})\;\;\;\;\;\;\;  1\leq i\leq k.$$

Let $\alpha:\pi_1(\mathbb{D}\setminus\mathcal{S})\to
\mathcal{U}({\mathbb{C}^n}) $ be a representation. Since
$\pi_1(\mathbb{D}\setminus\mathcal{S})$ is a free group on $k$ generators,
the representation determined by $k$ unitary operators
$V_1,\ldots, V_k$. The representation $\alpha$ defines a flat unitary
vector bundle $E_{B}$ over $\mathbb{D}\setminus\mathcal{S}$. Informally, we
define a topology on $\mathbb{D} \times \mathbb{C}^n$ so that as a
point $(z,v)$ moves continuously across the cut $l_i$ the vector
$v$ becomes $V_i(v)$. Now we are ready to identify $T_{B}$ as a Bergman bundle shift which is the main result of the paper. Not many arguments come from Sun, Zheng, and the first author's paper \cite{dsz}, especially the calculation in the following proof.

\begin{thm} Let $B$ be a finite Blaschke product of order $n$. Let
$T_B$ be the multiplication operator on $L_a^2(\mathbb{D})$ by
$B$. Let $E_{B}$ be the flat unitary vector bundle over
$\mathbb{D}\setminus\mathcal{S}$ determined by $B$. Let $T_{E_{B}}$ be the
multiplication operator on $L_a^2(E_{B})$ by $z$. Then operator
$T_B$ is unitarily equivalent to operator $T_{E_{B}}$.
\end{thm}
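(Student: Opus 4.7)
The plan is to construct an explicit unitary $U\colon L_a^2(\mathbb{D})\to L_a^2(E_B)$ using the $n$ globally defined inverse branches $\sigma_1,\ldots,\sigma_n$ of $B$ on the slit simply connected domain $\mathbb{D}_B$. In the canonical trivialization of $E_B$ over $\mathbb{D}_B$ I would set
\[(Uf)(z)=\bigl(\sigma_1'(z)f(\sigma_1(z)),\ldots,\sigma_n'(z)f(\sigma_n(z))\bigr).\]
The first step is to check that $Uf$ extends from $\mathbb{D}_B$ to a bona fide holomorphic section of $E_B$ over $\mathbb{D}\setminus\mathcal{S}$. Analyticity on $\mathbb{D}_B$ is immediate since each $\sigma_j$ is holomorphic there. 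Across a cut $l_i$, analytic continuation of the inverse branches realises the permutation $\tau_i$ of $\{1,\ldots,n\}$, which by construction is precisely the action of the unitary $V_i=\alpha(\gamma_i)$ on the fiber $\mathbb{C}^n$; hence the vector $(\sigma_j'\,f\circ\sigma_j)_{j=1}^n$ transforms correctly under the monodromy and determines a global holomorphic section.

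Next comes the isometry property. By Lemma~\ref{lem1} the open sets $\sigma_j(\mathbb{D}_B)$, $1\le j\le n$, are pairwise disjoint, and their union differs from $\mathbb{D}$ by a null set (the $B$-preimages of the cuts $l_i$ together with the critical points of $B$). The holomorphic change of variables with Jacobian $|\sigma_j'|^2$ then gives
\begin{eqnarray*}
\|Uf\|^2_{L_a^2(E_B)}&=&\int_{\mathbb{D}_B}\sum_{j=1}^n |\sigma_j'(z)|^2\,|f(\sigma_j(z))|^2\,dA(z)\\
&=&\sum_{j=1}^n\int_{\sigma_j(\mathbb{D}_B)}|f(w)|^2\,dA(w)=\|f\|^2_{L_a^2(\mathbb{D})},
\end{eqnarray*}
so $U$ is an isometric embedding. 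The intertwining $UT_B=T_{E_B}U$ is a one-line consequence of $B\circ\sigma_j=\mathrm{id}$: in the $j$-th slot, $(U(Bf))(z)_j=\sigma_j'(z)B(\sigma_j(z))f(\sigma_j(z))=z\,\sigma_j'(z)f(\sigma_j(z))=z(Uf)(z)_j$.

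For surjectivity, I would take $g\in L_a^2(E_B)$ with components $(g_1,\ldots,g_n)$ in the trivialization over $\mathbb{D}_B$ and define $f$ on $\sigma_j(\mathbb{D}_B)$ by $f(w)=B'(w)\,g_j(B(w))$, equivalently $f\circ\sigma_j=g_j/\sigma_j'$. The monodromy relation on $g$ matches the branch permutation $\tau_i$, so the pieces agree across the preimages of the cuts and assemble into a single-valued holomorphic function on the complement in $\mathbb{D}$ of the critical set of $B$. At a critical point $\beta$ of order $m$, Lemma~\ref{lem2} gives $B(w)-B(\beta)\sim(w-\beta)^m$, so $B'(w)$ vanishes to order $m-1$; combined with the merging of $m$ inverse branches near $\beta$, one then extracts a local description of $f$ in a punctured neighbourhood of $\beta$ to which Riemann's removable singularity theorem applies, using the $L^2$ control inherited from $\|g\|_{L_a^2(E_B)}<\infty$. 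The isometry formula above then shows $f\in L_a^2(\mathbb{D})$ with $\|f\|=\|g\|$ and $U(f)=g$.

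The main obstacle is this last extension step at the critical points of $B$: the monodromy argument handles the cuts $l_i$ painlessly, but at each $\beta$ one must verify that the factor $B'(w)$, vanishing to precisely the order dictated by Lemma~\ref{lem2}, absorbs the combined singularities of the $g_j$ pulled back along the $m$ merging branches, and then invoke an $L^2$ removable-singularity argument to conclude that $f$ is holomorphic on all of $\mathbb{D}$. Once this is in hand, the four pieces — well-definedness, isometry, intertwining, and surjectivity — combine to prove that $T_B$ and $T_{E_B}$ are unitarily equivalent.
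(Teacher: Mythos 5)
Your proposal is essentially the paper's own proof: the same intertwining map built from the branches $\sigma_j$ of $B^{-1}$ on the slit domain $\mathbb{D}_B$, with the weight $\sigma_j'$, the same use of Lemma~\ref{lem1} plus the change of variables for the isometry, the identity $B\circ\sigma_j=\mathrm{id}$ for the intertwining, and the same inversion formula $f\circ\sigma_j=g_j/\sigma_j'$ for surjectivity (where you are in fact more careful than the paper, which dismisses the extension across $B^{-1}(\cup_i l_i)$ with ``the values match up on the boundary,'' whereas your removable-singularity argument at the critical points is the honest way to finish). One small point in your favour: you omit the paper's normalizing factor $1/\sqrt{n}$, and your version is the one consistent with the change-of-variables computation, since $\sum_j\int_{\mathbb{D}_B}|\sigma_j'|^2|f\circ\sigma_j|^2\,dA=\sum_j\int_{\sigma_j(\mathbb{D}_B)}|f|^2\,dA=\|f\|^2$ already, with no factor of $n$ appearing.
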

\begin{proof} Define a map $\Gamma:L_a^2(\mathbb{D})\to
L_a^2(E_{\alpha})$ as follows $$ \Gamma(f)=
\frac{1}{\sqrt{n}}\big[\big((f\circ
\sigma_1)\sigma_1^{\prime},\ldots, (f\circ
\sigma_n)\sigma_n^{\prime}\big)^{\rm tr}\big]$$ where
$\big[\big((f\circ \sigma_1)\sigma_1^{\prime},\ldots, (f\circ
\sigma_n)\sigma_n^{\prime}\big)^{\rm tr}\big]$ denotes the
equivalence class of $\big((f\circ
\sigma_1)\sigma_1^{\prime},\ldots, (f\circ
\sigma_n)\sigma_n^{\prime}\big)^{\rm tr}$. We show that $\Gamma$
is a unitary map: \\
(1) $\Gamma$ preserves the inner product: \begin{eqnarray*} \langle
\Gamma(f),\Gamma(g)\rangle &=&
\frac{1}{n}\left\langle\big[\big((f\circ
\sigma_1)\sigma_1^{\prime},\ldots, (f\circ
\sigma_n)\sigma_n^{\prime}\big)^{\rm tr}\big],\big[\big((g\circ
\sigma_1)\sigma_1^{\prime},\ldots, (g\circ
\sigma_n)\sigma_n^{\prime}\big)^{\rm tr}\big]\right\rangle\\
&=&-\frac{1}{2n\pi i}\sum_{i=1}^{n}\int_{\sigma_i(\mathbb{D}_B)}
(f\circ\sigma_i)(z)\overline{(g\circ\sigma_i)(z)}
\;\;|\sigma^{\prime}_i(z)|^2
dz\wedge d\bar z\\
&=&-\frac{1}{2n\pi i}\sum_{i=1}^{n}\int_{\mathbb{D}_B}
f(z)\overline{g(z)} \;dz\wedge d\bar z\\
&=&-\frac{1}{2\pi i}\int_{\mathbb{D}_B}
f(z)\overline{g(z)} \;dz\wedge d\bar z\\
&=&-\frac{1}{2\pi i}\int_{\mathbb{D}} f(z)\overline{g(z)}
\;dz\wedge d\bar z\\
&=&\langle f,g \rangle.
\end{eqnarray*}
(2) $\Gamma$ is onto:

Let $[(g_1,\ldots,g_n)^{\rm tr}]$ be an element of $L_a^2(E_B)$.
Define a function $g$ on $\mathbb{D}$ as follows
\begin{eqnarray*}
g(z)=\begin{cases} \sqrt{n}\;g_i\circ B (z)\cdot
((\sigma_i)^{\prime}(B(z)))^{-1}
\;\;\mbox{if}\;\; z\in \sigma_i(\mathbb{D}_B),\;\; 1\leq i\leq n\\
\mbox{defined by equivalence relation}\;\; \mbox{if}\;\; z\in
B^{-1}(\cup_{j=1}^{k}l_j).
\end{cases}
\end{eqnarray*}
Since the values match up on the boundary by construction, $g$ is a holomorphic function on $\mathbb{D}$ and
\begin{eqnarray*}
\Gamma(g)&=&\big[\big((g\circ\sigma_1)\cdot\sigma_1^{\prime},\ldots,(g\circ\sigma_n)\cdot\sigma_n^{\prime}\big)^{\rm tr}\big]\\
&=&\big[\big(g_1\cdot
((\sigma_1)^{\prime})^{-1}\sigma_1^{\prime},\ldots,g_n\cdot
((\sigma_n)^{\prime})^{-1}\sigma_n^{\prime}\big)^{\rm tr}\big]\\
&=&\big[\big(g_1,\ldots,g_n\big)^{\rm tr}\big]
\end{eqnarray*}
\noindent (3) $\Gamma$ intertwines $T_B$ and $T_{E_B}$: For $f\in
L_a^2(\mathbb{D})$
\begin{eqnarray*}
\Gamma\circ T_B(f)&=& \Gamma(Bf)\\
&=&
\frac{1}{\sqrt{n}}\big[\big((B\circ\sigma_1)(f\circ\sigma_1)\cdot\sigma_1^{\prime},\ldots,
(B\circ\sigma_n)(f\circ\sigma_n)\cdot\sigma_n^{\prime}\big)^{\rm
tr}\big]\\
&=&\frac{1}{\sqrt{n}}\big[\big(z(f\circ\sigma_1)\cdot\sigma_1^{\prime},\ldots,
z(f\circ\sigma_n)\cdot\sigma_n^{\prime}\big)^{\rm
tr}\big]\\
&=&z\frac{1}{\sqrt{n}}\big[\big((f\circ\sigma_1)\cdot\sigma_1^{\prime},\ldots,
(f\circ\sigma_n)\cdot\sigma_n^{\prime}\big)^{\rm
tr}\big]\\
&=&T_{E_B}\circ\Gamma(f).
\end{eqnarray*}
Thus $$\Gamma\circ T_B=T_{E_B}\circ\Gamma.$$
\end{proof}

Some arguments in the proof of this result is closely related to the paper \cite{dsz}.

\begin{cor}
$(\mathcal{W}^{\ast}(T_{B}))'$ is isomorphic the commutant algebra of unitary matrixes which define the the bundle.
\end{cor}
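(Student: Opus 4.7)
The plan is to chain together three identifications already established in the paper, so no new analytic content is required—only bookkeeping of the isomorphisms.

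First, by the preceding theorem, $T_{B}$ on $L_{a}^{2}(\mathbb{D})$ is unitarily equivalent, via the map $\Gamma$, to the bundle shift $T_{E_{B}}$ on $L_{a}^{2}(E_{B})$. Since a unitary equivalence of operators induces a $\ast$-isomorphism between the von Neumann algebras they generate, it also induces a $\ast$-isomorphism between their commutants. Hence $(\mathcal{W}^{\ast}(T_{B}))' \cong (\mathcal{W}^{\ast}(T_{E_{B}}))'$.

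Second, by construction $E_{B}=E_{\alpha}$, where the representation $\alpha:\pi_{1}(\mathbb{D}\setminus\mathcal{S})\to \mathcal{U}(\mathbb{C}^{n})$ is determined by sending the generators of the free group $\pi_{1}(\mathbb{D}\setminus\mathcal{S})$ (namely the loops around $\beta_{1},\dots,\beta_{k}$) to the unitary permutation matrices $V_{1},\dots,V_{k}$ defined by monodromy. Proposition \ref{b1} identifies $T_{E_{\alpha}}$ unitarily with $T_{\alpha}$ on $L_{a}^{2}(\mathbb{D},\mathbb{C}^{n},\alpha)$, so their commutants are $\ast$-isomorphic. Then Theorem \ref{thmb} (the analogue of the commutant theorem for the bad-boundary case $D_{0}=\mathbb{D}\setminus\mathcal{S}$) gives the key $\ast$-isomorphism $(\mathcal{W}^{\ast}(T_{\alpha}))' \cong (\mathcal{W}^{\ast}(\alpha))'$.

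Third, $\mathcal{W}^{\ast}(\alpha)$ is by definition the $W^{\ast}$-subalgebra of $\mathcal{L}(\mathbb{C}^{n})$ generated by the image $\alpha(\pi_{1}(\mathbb{D}\setminus\mathcal{S}))$. Because $\pi_{1}(\mathbb{D}\setminus\mathcal{S})$ is the free group on $k$ generators, this algebra is exactly the $W^{\ast}$-algebra generated by $V_{1},\dots,V_{k}$, and its commutant $(\mathcal{W}^{\ast}(\alpha))'$ is the algebra of $n\times n$ matrices that commute with each $V_{i}$, i.e. the commutant of the matrices defining the bundle.

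Composing the three $\ast$-isomorphisms gives the desired conclusion. There is no serious obstacle here; the only point requiring mild care is to verify that the unitary $\Gamma$ from the preceding theorem intertwines not merely $T_{B}$ and $T_{E_{B}}$ but in fact the whole $W^{\ast}$-algebras they generate, which is automatic since $\Gamma T_{B} \Gamma^{\ast}=T_{E_{B}}$ forces $\Gamma \mathcal{W}^{\ast}(T_{B})\Gamma^{\ast}=\mathcal{W}^{\ast}(T_{E_{B}})$ and hence $\Gamma (\mathcal{W}^{\ast}(T_{B}))'\Gamma^{\ast}=(\mathcal{W}^{\ast}(T_{E_{B}}))'$.
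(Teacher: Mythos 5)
Your proposal is correct and follows exactly the route the paper intends: the paper states this corollary without proof, and the intended argument is precisely your chain of the unitary equivalence $T_{B}\cong T_{E_{B}}$ from the main theorem, the identification of $T_{E_{\alpha}}$ with $T_{\alpha}$, Theorem~\ref{thmb} for the bad-boundary domain $D_{0}=\mathbb{D}\setminus\mathcal{S}$, and the observation that $(\mathcal{W}^{\ast}(\alpha))'$ is the commutant of $V_{1},\dots,V_{k}$. No discrepancy with the paper's approach.
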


\begin{cor}
The restriction of $T_{B}$ to a reducing subspace is a Bergman bundle shift also, which is corresponding to the reduction of the unitaries on that subbundle determined by that subspace.
\end{cor}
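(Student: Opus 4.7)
The plan is to translate the problem of classifying reducing subspaces of $T_B$ to a statement about subrepresentations of the defining unitary representation $\alpha$, using the unitary equivalence $T_B \simeq T_{E_B}$ from the main theorem together with the commutant isomorphism in Theorem \ref{thmb}.

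First, let $\mathfrak{M}\subseteq L_a^2(\mathbb{D})$ be a reducing subspace for $T_B$. The orthogonal projection $P_{\mathfrak{M}}$ lies in $(\mathcal{W}^*(T_B))'$. Transporting by the unitary $\Gamma$ constructed in the main theorem, we obtain a projection in $(\mathcal{W}^*(T_{E_B}))'$; by Theorem \ref{thmb} this corresponds to a projection $Q\in (\mathcal{W}^*(\alpha))'$, where $\alpha:\pi_1(\mathbb{D}\setminus\mathcal{S})\to\mathcal{U}(\mathbb{C}^n)$ is the representation that determines $E_B$. The defining property $\alpha(g)Q=Q\alpha(g)$ for all $g\in \pi_1(\mathbb{D}\setminus\mathcal{S})$ means the subspace $\mathfrak{K}_Q:=Q\mathbb{C}^n\subseteq\mathbb{C}^n$ is invariant under $\alpha(\pi_1(\mathbb{D}\setminus\mathcal{S}))$, so the restriction $\alpha_Q:=\alpha|_{\mathfrak{K}_Q}$ is a unitary subrepresentation.

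Next, I would build from $\alpha_Q$ the flat unitary vector bundle $E_B^Q$ over $\mathbb{D}\setminus\mathcal{S}$ via the construction described in Section 2. Since the equivalence relation defining $E_B^Q$ is the restriction to $\widetilde{\Omega}\times\mathfrak{K}_Q$ of the one defining $E_B$, the bundle $E_B^Q$ embeds as a flat unitary subbundle of $E_B$, and the fiberwise orthogonal projection $z\mapsto Q$ (which is well-defined on $E_B$ because $Q$ commutes with the transition matrices $\alpha(g)$) is precisely the projection onto $E_B^Q$. Passing to holomorphic $L^2$ sections, $L_a^2(E_B^Q)$ sits inside $L_a^2(E_B)$ as a reducing subspace for $T_{E_B}$, and the restriction $T_{E_B}|_{L_a^2(E_B^Q)}$ is by definition the Bergman bundle shift $T_{E_B^Q}$.

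Finally, I would check that under the unitary $\Gamma$ the reducing subspace $\mathfrak{M}$ is sent onto $L_a^2(E_B^Q)$; this follows because $\Gamma$ intertwines $P_{\mathfrak{M}}$ with the multiplication by $Q$ on $L_a^2(E_B)$ (both being the images of the same projection under the isomorphism of commutants). Consequently $T_B|_{\mathfrak{M}}$ is unitarily equivalent to $T_{E_B^Q}$, which is a Bergman bundle shift, and it is associated to the subrepresentation $\alpha_Q$, i.e., to the reduction of the defining unitaries to the subbundle cut out by the reducing subspace.

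The main obstacle I anticipate is the bookkeeping in the third paragraph: verifying that the projection $Q\in(\mathcal{W}^*(\alpha))'$ produced by Theorem \ref{thmb} really corresponds, under $\Gamma$, to the original projection $P_{\mathfrak{M}}$, and that the subspace $L_a^2(E_B^Q)$ so obtained coincides with $\Gamma(\mathfrak{M})$ rather than merely being unitarily equivalent to it. This is essentially a matter of tracing the identifications in Theorem \ref{thmb} and in the proof of the main theorem, but the care lies in handling the $\alpha$-automorphic/equivalence-class description of sections of $E_B$ consistently.
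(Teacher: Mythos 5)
Your proposal is correct and follows exactly the route the paper intends: the paper states this corollary without a written proof, as an immediate consequence of the unitary equivalence $T_B\simeq T_{E_B}$ together with the commutant description in Theorem~\ref{thmb}, whose proof already shows that every projection in $(\mathcal{W}^*(T_{\alpha}))'$ acts as multiplication by a \emph{constant} projection $Q$ commuting with $\alpha$. That explicit form also resolves the bookkeeping worry you raise at the end, since the range of such a multiplication operator is literally the space of $\alpha$-automorphic functions valued in $Q\mathbb{C}^n$, i.e.\ the section space of the subbundle, not merely a space unitarily equivalent to it.
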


In \cite{ron}, model spaces for the special case $B=z^{n}$ is discussed explicitly.

As we mentioned earlier in this section, there is no way to put a canonical order on the set $B^{-1}(w)$ for $w\in\mathbb{D}\setminus\mathcal{S}$. Different choice of lines will yield different orders unless $\mathcal{S}$ has only one element. As a consequence, the representation of the covering group by permutation group is not unique.

Representing $T_{B}$ as a bundle shift allows us to recover most of the results in \cite{dpw},\cite{dsz} except for two key ones: the fact that $(\mathcal{W}^{\ast}(T_{B}))'$ is abelian and its linear dimension.
A more careful analysis of the covering group associated to the Riemann surface $\{(z_{1},z_{2}):B(z_{1})=B(z_{2})\}$ for $B$ will be required for that. Also the latter is a rational function in $z_{1}$ and $z_{2}$, its polynomial numerator is closely related to the permutation covering group, most likely via Galois theory. We leave such analyses to a later paper.

\noindent \textbf{Acknowledgement:} The second and third authors
thank Department of Mathematics, Texas A \& M, University for warm
hospitality. The work of D.K.K was supported partially in the form of a fellowship by
Indo-US The Virtual Institute for Mathematics and Statistical
Sciences (VI-MSS)  and  partially in the form of Stipend by
Department of Mathematics, Texas A\&M, University. The work of A.X
was supported by China Scholarship council and Chongqing University of Technology.


\bibliographystyle{amsplain}
\bibliography{bibliography}
\end{document}